\newtheorem{theo}{Theorem}[section]
\newtheorem{prop}[theo]{Proposition}
\newtheorem{lem}[theo]{Lemma}
\newtheorem{cor}[theo]{Corollary}
\theoremstyle{remark}
\newtheorem*{rem}{Remark}
\theoremstyle{definition}
\newtheorem*{hyp}{Hypothesis($\lambda_0$)}
\newcommand{\dcb}{\begin{array}{lll}}
\newcommand{\dce}{\end{array}}
\def\R{\mathbb{R}}
\def\1{\mathbf{1}}
\def\E{\mathbb{E}}
\def\N{\mathbb{N}}
\def\G{\mathtt{G}}
\def\m{m}
\def\const{\mathrm{const}}
\def\F{\cal{F}}
\begin{document}

\begin{frontmatter}

\title{Poincar\'e inequality and exponential integrability of hitting times for linear diffusions}
\runtitle{Poincar\'e inequality and hitting times}

\begin{aug}
\author{\fnms{Dasha} \snm{Loukianova}\corref{}\ead[label=e2]{dasha.loukianova@univ-evry.fr}\thanksref{m2}},
\author{\fnms{Oleg} \snm{Loukianov}\ead[label=e1]{oleg@iut-fbleau.fr}\thanksref{m1,m2}}
\and
\author{\fnms{Shiqi} \snm{Song}\ead[label=e3]{shiqi.song@univ-evry.fr}\thanksref{m2}}
\affiliation{D\'epartement de Math\'ematiques, Universit\'e d'Evry\thanksmark{m2}}
\affiliation{IUT de Fontainebleau, Universit\'e Paris-Est\thanksmark{m1}}

\address{Bd Fran\c{c}ois Mitterrand\\ 91025 Evry, France\\ \printead{e2}\\ \phantom{E-mail:\ }\printead*{e3}}
\address{Route Hurtault\\ 77300 Fontainebleau, France\\ \printead{e1}}

\runauthor{Loukianov, Loukianova and Song}
\end{aug}

\begin {abstract}
Let $X$ be a regular linear continuous positively recurrent Markov process with state space $\R$, scale function $S$ and speed measure $m$. For $a\in \R$ denote
\begin{align*}
B^+_a&=\sup_{x\geq a} \m(]x,+\infty[)(S(x)-S(a))\\
B^-_a&=\sup_{x\leq a} \m(]-\infty;x[)(S(a)-S(x))
\end{align*}
We study some characteristic relations between $B^+_a$, $B^-_a$, the exponential moments of the hitting times $T_a$ of $X$, the Hardy and Poincar\'e inequalities for the Dirichlet form associated with $X$. As a corollary, we establish the equivalence between the existence of exponential moments of the hitting times and the spectral gap of the generator of~$X$.
\end{abstract}

\begin{keyword}[class=AMS]
\kwd{60J25}
\kwd{60J35}
\kwd{60J60}
\end{keyword}

\begin{keyword}
\kwd{Markov process}
\kwd{hitting times}
\kwd{exponential moments}
\kwd{Poincar\'e inequality}
\kwd{spectral gap}
\kwd{Dirichlet form}
\end{keyword}

\end{frontmatter}

\section*{Introduction}

In this paper $\R$ is considered as a metric space equipped with its usual Borel field. All functions or measures mentioned below are supposed to be Borel measurable.

Let $(X_t\ ;\ t\geq 0)$ be a regular linear continuous Markov process with the state space $\R$. We assume throughout the paper that $X$ is positively recurrent and conservative (the killing time is identically $+\infty$). Denote by $S(x)$ a scale function of $X$ and $m(dx)$ the speed measure associated with $S$ (cf. \cite[ch.VII]{RY}). Recall that $S$ is a continuous strictly increasing function and $m(dx)$ is a symmetric measure for $X$, charging every no empty open set. Moreover, the positive recurrence of $X$ implies $\lim_{x\to \pm \infty}S(x)=\pm\infty$ and $m(\R)<\infty$.

Let $a\in\R$ and $T_a=\inf\{t\geq 0\ :\ X_t=a\}$ be the hitting time by $X$ at~$a$. The first question we are interested in is the existence of exponential moments $\mathbb{E}_x[e^{\lambda T_a}]$, $x\in\R$, $\lambda>0$. 

Hitting times of linear Markov process intervene in many circumstances: mathematical finance, neural modeling, sequential analysis in statistics, etc. The finiteness of their exponential moments  permits to obtain  moderate and large deviations for additive functionals of $X$, important in all considerations using averaging principle.  In some particular cases such moments  have been well studied. We mention, for example, Ditlevsen \cite {Dit} for Ornstein-Uhlenbeck process, Giorno, Nobile, Riccardi, Sacredote \cite {GNRS}  for Bessel and Ornstein-Uhlenbeck processes, Deaconu and Wantz \cite {DeaWa} for diffusion with strong drift, and the book of Borodin and Salminen \cite {BoSa} for an overview of known formulas. But we were not able to find in the literature a simple general criterion of the existence of exponential moments in terms of the scale function $S(x)$ and the speed measure $m(dx)$. In the present paper this question gets a very satisfactory response in the quantities:
\begin{align*}
B^+_a&=\sup_{x\geq a} \m(]x,+\infty[)(S(x)-S(a))\\
B^-_a&=\sup_{x\leq a} \m(]-\infty;x[)(S(a)-S(x))
\end{align*}
Namely, let $\lambda_a^+$  be the supremum of $\lambda>0$ such that $\E_x e^{\lambda T_a}<\infty$ for some $x>a$ (hence for all $x>a$, see the ``all-or-none'' property~\ref {allornothing}). Respectively, let $\lambda_a^-$ be the supremum of $\lambda$ such that $\E_x e^{\lambda T_a}<\infty$ for some (hence all) $x<a$. Our first result (see section~\ref{sec:expmom}, theorem~\ref {Expmoments}) asserts that
\begin{eqnarray*}
\frac{1}{4B_a^+}\leq\lambda^+_a\leq \frac{1}{B_a^+}\\
\frac{1}{4B_a^-}\leq\lambda^-_a\leq \frac{1}{B_a^-}
\end{eqnarray*}
where $B_a^+$ and $B_a^-$ can eventually be infinite.

Actually, quantities similar to $B^+_a$, $B^-_a$ have already appeared in a theorem due to M. Artola, G. Talenti and G. Tomaselli \cite {Mu} to characterize a couple of probabilities on $\R$ satisfying some Hardy-type inequality. This theorem was generalized by Bobkov and G\"otze \cite {BoGo} and  Malrieu and Roberto \cite {MalRob} and used to characterize probability measures $\mu$ on $\R$, absolutely continuous with respect to Lebesgue measure and satisfying Poincar\'e and Log-Sobolev inequalities associated with $\int_{\R} (f')^2(x)d\mu(x)$. It turns out that $B^+_a$ and $B^-_a$ are an important characteristics of the process $X$ also in this context. In the second section of our paper 
we bridge  $B_a^+$ and $B_a^-$ to the Hardy and Poincar\'e inequalities associated with  ${\cal E(F)}=\int\left (\frac {dF}{dS}\right )^2 dS$ which, as shown in section~\ref{sec:gap}, is the Dirichlet form associated with $X$. 
 We prove that the best possible constants $A_a^+$, $A_a^-$ in the Hardy inequalities
\begin {eqnarray*}
\int_{a}^\infty (F(x)-F(a))^2d\m(x)\leq A_a^+\int_a^{\infty} \left(\frac{dF}{dS}\right)^2(t)dS(t)\\
\int_{-\infty}^a (F(x)-F(a))^2d\m(x)\leq A_a^-\int_{-\infty}^{a} \left(\frac{dF}{dS}\right)^2(t)dS(t)
\end{eqnarray*}
on an appropriate functional space $\mathcal{F}$, satisfy (see theorem \ref{Hardy})
\begin{eqnarray*}
B_a^+\leq A_a^+\leq 4B_a^+\\
B_a^-\leq A_a^-\leq 4B_a^-
\end{eqnarray*}
Furthermore, if $c_P$ is the best possible constant $c$ in the Poincar\'e inequality 
\begin {equation*}
\int_{-\infty}^{+\infty}\left (F(x)-\frac{m(F)}{m(\R)}\right )^2 dm(x)\leq c\int_{-\infty}^{+\infty}\left (\frac {dF}{dS}\right )^2(x)dS(x),
\end{equation*} 
then $c_P$ satisfies (see theorem~\ref{Poincare})
\begin{equation*}
\sup_{a}(A_a^+\wedge A_a^-)\leq c_P\leq \inf_a(A_a^+\vee A_a^-).
\end{equation*}

In the third section we prove (see theorem~\ref{D-form of X}) that the right-hand side ${\cal E(F)}=\int_{-\infty}^{+\infty}\left (\frac {dF}{dS}\right )^2(x) dS(x)$ of the Poincar\'e inequality, is the Dirichlet form associated with $X$. The Poincar\'e inequality yields then in a usual way a bound on the spectral gap $\gamma=1/c_P$ of the generator of $X$ on $\R$. In their turn, the Hardy inequalities are shown to be related to the spectral gaps $\gamma_a^+$, $\gamma_a^-$ of the generator of $X$ killed at $T_a$ by the equalities
\begin{equation*}
\frac{1}{A^+_a} = \gamma^+_a\quad\mbox{and}\quad
\frac{1}{A^-_a} = \gamma^-_a.
\end{equation*}

At this stage, let us cite a theorem of Carmona and Klein \cite {CaKl} asserting that if the generator of a Markov process admits a spectral gap, then its hitting times have exponential moments. Our results show that, in our setting, these properties are actually both equivalent to the finiteness of $B_a^{+}$ and $B_a^{-}$ for some (and hence for all) $a\in\R$.
We establish thereby the equivalence between the existence of a spectral gap of the generator and exponential moments of hitting times for linear continuous positively recurrent Markov processes.

Finally, in the last section of this article we precise this equivalence binding in a very direct way the exponential moments of hitting times to spectral gaps associated with $X$. Namely, we  show (theorem \ref{gapandmoments}) that for any $a\in\R$, 
\begin{equation*}
\gamma^+_a = \lambda^+_a\quad\mbox{and}\quad\gamma^-_a = \lambda^-_a.
\end{equation*}

A similar identity for exit times from a bounded domain $D$ is actually well known since the works of Khasminskii~\cite {Khas} and Friedman~\cite {Fri}. Namely, if $\tau$ is the exit time from $D$ and $X^D$ is a process killed at $\tau$, then the equality holds between the width of the spectral gap of the generator of $X^D$  and the supremum of  $\lambda>0$ such that $\E_xe^{\lambda \tau}<\infty$ for all $x\in D$.

Section~\ref{sec:friedman} is thus devoted to the proof of this equality for half-spaces $]a;+\infty[$ and $]-\infty;a[$. It should be pointed out that we can not directly apply PDE methods of \cite {Khas} that require $\E_x\tau$ to be bounded. Instead, we use the spectral calculus, which is available thanks to the symmetry of the generator, automatically fulfilled in dimension one.

Notice that we can not establish such kind of equality  directly on $\R$, because the process $X$ is conservative and the exit time from $\R$ is identically infinite. But using the bounds on the optimal Poincar\'e constant $c_P$ above, we relate the global spectral gap $\gamma=1/c_P$ to $\lambda_a^\pm$ (see theorem \ref{gapandmoments}) by the inequalities 
  
\begin{equation*}
\sup_a\left({\lambda^+_a}\wedge{\lambda^-_a}\right) \leq \gamma\leq \inf_a\left({\lambda^+_a}\vee{\lambda^-_a}\right).
\end{equation*}
 

\section {Exponential integrability of hitting times}\label{sec:expmom}

In this section we study the exponential moments of hitting times $T_a$. For $a\in\R$, denote $$\lambda_a^+=\sup\{\lambda\geq 0\ :\ \forall x>a,\quad \E_x e^{\lambda T_a}<\infty\};$$
and
$$\lambda_a^-=\sup\{\lambda\geq 0\ :\ \forall x<a,\quad \E_x e^{\lambda T_a}<\infty\}.$$
As we will see (proposition \ref {allornothing}), an important ``all-or-none'' property holds for any $\lambda>0$:
$$\exists x>a,\quad\E_x e^{\lambda T_a}<\infty \iff \forall x>a,\quad \E_x e^{\lambda T_a}<\infty,$$
the same being true for $x<a$.

Recall the definitions
\begin{align*}
B^+_a&=\sup_{x\geq a} \m(]x,+\infty[)(S(x)-S(a))\\
B^-_a&=\sup_{x\leq a} \m(]-\infty;x[)(S(a)-S(x))
\end{align*}
The main result of this section is
\begin{theo}\label {Expmoments}
For all $a\in\R$,
\[
\frac{1}{4B_a^+}\leq\lambda^+_a\leq \frac{1}{B_a^+}\quad\mbox{and}\quad \frac{1}{4B_a^-}\leq\lambda^-_a\leq \frac{1}{B_a^-}
\]
with the convention $1/\infty=0$.
\end{theo}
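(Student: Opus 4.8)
The plan is to control the polynomial moments $u_n(x):=\E_x[T_a^n]$ by one Hardy--Muckenhoupt estimate. Passing to natural scale (replace $X$ by $S(X)$) and translating, one may assume $S=\mathrm{id}$ and $a=0$, so that $X$ is a conservative recurrent natural-scale diffusion with finite speed measure $m$ charging every nonempty open set, and $B_0^+=\sup_{x\geq0}x\,m(]x,+\infty[)$; by the reflection $x\mapsto-x$ it then suffices to prove the ``$+$'' inequalities. Recurrence makes $+\infty$ inaccessible, so for the diffusion on $]0,+\infty[$ killed at $0$ the occupation formula reads $\E_x\!\big[\int_0^{T_0}f(X_s)\,ds\big]=\int_0^\infty(x\wedge z)f(z)\,m(dz)=:(Kf)(x)$ for $f\geq0$. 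From $T_0^{\,n}=n\int_0^{T_0}(T_0-s)^{n-1}ds$, the identity $T_0-s=T_0\circ\theta_s$ on $\{s<T_0\}$, and the Markov property one gets (all quantities lying in $[0,+\infty]$, so Tonelli suffices) $u_0\equiv1$, $u_n=nKu_{n-1}$, hence $u_n=n!\,K^n\1$ and $\E_x[e^{\lambda T_0}]=\sum_{n\geq0}\lambda^n(K^n\1)(x)$; thus the whole statement is about the growth of $\|K^n\1\|$.

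For $\lambda_0^+\leq 1/B_0^+$: if $g\geq0$ is nondecreasing then $(Kg)(x_0)\geq x_0\int_{]x_0,+\infty[}g\,dm\geq x_0\,m(]x_0,+\infty[)\,g(x_0)$. Since each $u_n$ is nondecreasing (starting farther from $0$ stochastically delays $T_0$, by path-continuity and the strong Markov property), induction on $u_n=nKu_{n-1}$ gives $u_n(x_0)\geq n!\,\big(x_0\,m(]x_0,+\infty[)\big)^n$, so $\E_{x_0}[e^{\lambda T_0}]\geq\sum_n\big(\lambda x_0 m(]x_0,+\infty[)\big)^n=+\infty$ whenever $\lambda\, x_0\, m(]x_0,+\infty[)\geq1$. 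Hence $\lambda_0^+\leq 1/\big(x_0\, m(]x_0,+\infty[)\big)$ for every $x_0>0$, i.e. $\lambda_0^+\leq 1/B_0^+$ (with $1/\infty=0$).

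For $\lambda_0^+\geq 1/(4B_0^+)$ (only the case $B_0^+<\infty$ needs an argument) the key step is $\|K\|_{L^2(m)\to L^2(m)}\leq 4B_0^+$. Using $x\wedge z=\int_0^\infty\1_{\{t<x\}}\1_{\{t<z\}}dt$ and Fubini, for $f\geq0$ one has $\langle Kf,f\rangle_{L^2(m)}=\int_0^\infty\big(\int_{]t,+\infty[}f\,dm\big)^2dt$; applying Cauchy--Schwarz inside the square with the weight $\rho(x)=2\sqrt{m([x,+\infty[)}$ and using only $t\,m(]t,+\infty[)\leq B_0^+$, elementary calculus (telescoping with left limits of $m([\,\cdot\,,+\infty[)$ to absorb atoms) gives $\int_{]t,+\infty[}\rho^{-1}dm\leq\sqrt{m(]t,+\infty[)}$ and then $\rho(x)\int_0^x\sqrt{m(]t,+\infty[)}\,dt\leq 4B_0^+$, whence $\langle Kf,f\rangle_{L^2(m)}\leq 4B_0^+\|f\|_{L^2(m)}^2$; as $K$ is a nonnegative symmetric kernel operator this yields the operator bound. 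Then for fixed $x>0$ and $n\geq1$, $u_n(x)=n!\,\langle x\wedge\cdot\,,\,K^{n-1}\1\rangle_{L^2(m)}\leq n!\,\|x\wedge\cdot\|_{L^2(m)}(4B_0^+)^{n-1}\|\1\|_{L^2(m)}\leq n!\,(4B_0^+)^{n-1}\,x\,m(\R)$ (here $m(\R)<\infty$ is essential), so $\E_x[e^{\lambda T_0}]\leq 1+\lambda\, x\,m(\R)\sum_{n\geq1}(4\lambda B_0^+)^{n-1}<\infty$ for every $\lambda<1/(4B_0^+)$ and every $x>0$; hence $\lambda_0^+\geq 1/(4B_0^+)$.

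The only genuine difficulty is the Hardy--Muckenhoupt bound $\|K\|_{L^2(m)}\leq 4B_0^+$ with the sharp constant $4$: obtaining exactly $4$ forces the particular choice of weight $\rho$, and the presence of atoms in $m$ requires carrying out the telescoping estimate for $\int\rho^{-1}dm$ with left limits of $m([\,\cdot\,,+\infty[)$ rather than the naive values --- a routine but careful point, of the kind already met in \cite{BoGo,MalRob}. Everything else reduces to manipulations of nonnegative quantities in $[0,+\infty]$ together with the standard occupation formula for the killed linear diffusion.
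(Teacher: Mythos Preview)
Your proof is correct and takes a genuinely different route from the paper's Section~1.

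For the upper bound $\lambda_a^+\leq 1/B_a^+$, the paper first estimates exit times from bounded intervals $[a,b]$ (Lemma~\ref{sortint}) and then lets $b\to\infty$ (Proposition~\ref{momentinfini}). Your argument is more direct: you use the monotonicity of $u_n=\E_\bullet T_a^n$ together with the one--line lower bound $(Kg)(x_0)\geq x_0\,m(]x_0,\infty[)\,g(x_0)$ for nondecreasing $g$, and iterate. This avoids the auxiliary parameters $\kappa_1,\kappa_2$ and the passage through finite intervals.

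For the lower bound $\lambda_a^+\geq 1/(4B_a^+)$, the paper decomposes the Green operator as $G=J+K$, proves the pointwise inequality $JK\leq B(J+K)$, deduces a combinatorial estimate $\frac{1}{n!}\E_x T_a^n\leq\sum_l a_{n,l}B^lK^{n-l}\1$ with $\sum_l a_{n,l}\leq 4^n$, and closes with the $L^1(m)$ bound $\|K\|_{L^1\to L^1}\leq B$ (Propositions~\ref{algebre}--\ref{prop:4B}). You instead bound the $L^2(m)$ operator norm of the \emph{full} Green operator by $4B_a^+$ via the identity $\langle Gf,f\rangle_{L^2(m)}=\int_0^\infty\big(\int_{]t,\infty[}f\,dm\big)^2dt$ and a weighted Cauchy--Schwarz (Muckenhoupt) argument; then $u_n(x)=n!\,\langle x\wedge\cdot,G^{n-1}\1\rangle$ yields the geometric bound immediately. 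This is precisely the dual form of the Hardy inequality $A_a^+\leq 4B_a^+$ proved in Section~\ref{sec:poincare} (Theorem~\ref{Hardy}): your $L^2$ estimate is equivalent to it, since $G=T T^*$ with $T:L^2(dS)\to L^2(m)$, $Tg(x)=\int_a^x g\,dS$. In effect you are anticipating, already at the level of moment estimates, the identity $\lambda_a^+=1/A_a^+$ that the paper only obtains later via the Khasminskii argument (Theorem~\ref{gapandmoments}).

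What each approach buys: yours is shorter and makes the link to the Hardy constant transparent from the outset; the paper's Section~1 is self--contained (no $L^2$ spectral input), and its step bound $\|K\|_{L^1\to L^1}\leq B$ is sharper than $4B$, the factor $4$ entering only through the combinatorics of the $a_{n,l}$. Your remark that the atom case in $\int_{]t,\infty[}\rho^{-1}dm\leq\sqrt{m(]t,\infty[)}$ needs the left limits is well taken; it follows from the concavity inequality $\frac{a-b}{2\sqrt{a}}\leq\sqrt{a}-\sqrt{b}$ applied jumpwise to $\psi(x)=m(]x,\infty[)$.
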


In the sequel we often prove only assertions concerning $B_a^+$ and $\lambda_a^+$, since the proofs of their counterparts are completely similar.

\subsection{Kac formula}
The Kac formula, first derived in Kac~\cite{Kac,Kac1} for linear Brownian motion, then generalized in Darling and Kac~\cite {DK} and in Fitsimons and Pitman~\cite{FitsKac}, permits to calculate the moments of $A_v=\int_0^Tv(X_t)dt$ for a function $v$ of a Markov process $(X)$ and a suitable random time $T$. In our proof we need a particular case of this formula, where $v=1$ and $T$ is an exit time from an interval or a hitting time.

For  $a<x<b$ consider 
\begin{equation*}
T_{a,b}=\inf\{t\geq 0; \ X_t\notin ]a,b[\}.
\end{equation*}
The Green potential kernel on $[a,b]$ is given by (see e.g. \cite[ch. VII]{RY})
\begin{equation*}
G(a,b,x,y)=\frac{1}{S(b)-S(a)}\left\{
\begin{array}{ccl}
(S(b)-S(x))(S(y)-S(a))&\mbox{if}&a\leq y\leq x\leq b\\
(S(b)-S(y))(S(x)-S(a))&\mbox{if}&a\leq x\leq y\leq b
\end{array}\right.
\end {equation*}
This kernel defines  the Green operator
\[Gf(x)=\int_a^b G(a,b,x,y)f(y)\,dm(y).\]
Notice that since $G(a,b,x,a)=G(a,b,x,b)=0$, the integration interval may or may not include $a$ and $b$.

With the help of this operator we can calculate the moments of $T_{a,b}$ using Kac formula
\begin{equation}\label {Tnab}
\E_x T_{a,b}^n=n\int_a^b G(a,b,x,y)\E_y T_{a,b}^{n-1} dm(y)=n! G^n 1(x)
\end {equation}
To obtain an analogous formula  for the moments of hitting times $T_a$, $a\in\R$,  recall that $\lim_{t\to\pm\infty} S(t)=\pm\infty$, and consider the limits of $G(a,b,x,y)$ when $a\to -\infty$  (resp. $b\to\infty$ ): 

\begin{equation*}
G(-\infty,b,x,\xi)=\left\{\begin{array}{cr}
(S(b)-S(\xi))&  x\leq \xi\leq b\\
(S(b)-S(x))&-\infty< \xi \leq x
\end{array}\right.
\end{equation*}
\begin{equation*}
G(a,+\infty,x,\xi)=\left\{\begin{array}{ll}
(S(\xi)-S(a))&  a\leq \xi\leq x\\
(S(x)-S(a))& x \leq \xi< \infty
\end{array}\right.
\end{equation*}
Taking monotone limits in \eqref {Tnab}, we get a formula for the $n$-th moment of hitting times (see also \cite {LLL}):
\begin{equation}\label{eq:Kacpoly}
\begin{split}
\E_xT_{b}^n=n\int_{-\infty}^{b}G(-\infty,b,x,\xi) \E_{\xi}T_{b}^{n-1} dm(\xi)\quad\mbox{if}\quad x<b\\
\E_xT_{a}^n=n\int_{a}^{+\infty}G(a,+\infty,x,\xi) \E_{\xi}T_{a}^{n-1} dm(\xi)\quad\mbox{if}\quad x>a
\end{split}
\end{equation}
The summation over $n$ yields a formula for exponential moments:
\begin{equation}\label{eq:Kacexp} 
\begin{split}
\E_x\exp (\lambda T_b)=1+\lambda\int_{-\infty}^{b}G(-\infty,b,x,\xi)\E_{\xi}\exp(\lambda T_{b}) dm(\xi),\quad x<b\\
\E_x\exp(\lambda T_a)=1+\lambda\int_{a}^{+\infty}G(a,+\infty,x,\xi)\E_{\xi} \exp(\lambda T_{a})  dm(\xi),\quad x>a
\end{split}
\end{equation}
\begin {rem}
The expressions \eqref{eq:Kacpoly}--\eqref{eq:Kacexp} are always defined, since all functions therein are positive. 
\end{rem}
The following proposition will be referred to as ``all-or-none'' property in the sequel:
\begin{prop}[all-or-none]\label {allornothing}
Let $a\in\R$ and $\lambda>0$. The following properties are equivalent:
\begin{itemize}
	\item for some $x>a$, $\E_{x}\exp(\lambda T_a)<\infty$
	\item for all $x>a$, $\E_x\exp(\lambda T_a)<\infty$
	\item $\int_{a}^{+\infty}\E_{\xi} \exp(\lambda T_{a}) dm(\xi)<\infty$
\end{itemize}

The same holds for $x<a$.
\end{prop}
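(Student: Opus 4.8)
The plan is to prove the chain $(3)\Rightarrow(2)\Rightarrow(1)\Rightarrow(3)$ on the half-line $\{x>a\}$, the case $\{x<a\}$ being symmetric. Write $u(\xi)=\E_\xi e^{\lambda T_a}\in[1,+\infty]$ for $\xi\geq a$ (so that $u(a)=1$), and recall from the Remark following \eqref{eq:Kacexp} that
\[
u(x)=1+\lambda\int_a^{+\infty}G(a,+\infty,x,\xi)\,u(\xi)\,dm(\xi),\qquad x>a,
\]
holds as an identity in $[0,+\infty]$, with $G(a,+\infty,x,\xi)=S(\xi\wedge x)-S(a)$. For $(3)\Rightarrow(2)$, fix $x>a$: since $G(a,+\infty,x,\xi)\leq S(x)-S(a)$ for every $\xi\geq a$, the identity gives $u(x)\leq 1+\lambda(S(x)-S(a))\int_a^{+\infty}u\,dm<\infty$. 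The implication $(2)\Rightarrow(1)$ is trivial.

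For $(1)\Rightarrow(3)$, suppose $u(x_0)<\infty$ for some $x_0>a$; in particular $T_a<\infty$ $\P_{x_0}$-a.s. For $a<y\leq x_0$, path continuity forces $T_y\leq T_a$ with $X_{T_y}=y$, so $T_a=T_y+T_a\circ\theta_{T_y}$ $\P_{x_0}$-a.s., and the strong Markov property at $T_y$ yields the multiplicative identity
\[
u(x_0)=\E_{x_0}\!\big[e^{\lambda T_y}\,u(X_{T_y})\big]=\E_{x_0}\!\big[e^{\lambda T_y}\big]\,u(y).
\]
As $e^{\lambda T_y}\geq1$ and $u(x_0)<\infty$, both factors are finite and $u(y)\leq u(x_0)$ (incidentally, this shows $u$ is non-decreasing on $[a,+\infty)$). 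Hence $u\leq u(x_0)$ on $(a,x_0]$, and since $m(\R)<\infty$ we get $\int_a^{x_0}u\,dm\leq u(x_0)\,m((a,x_0])<\infty$. For the tail, evaluate the Kac identity at $x=x_0$ and keep only the integral over $(x_0,+\infty)$, on which $G(a,+\infty,x_0,\xi)=S(x_0)-S(a)>0$: this gives $u(x_0)\geq 1+\lambda(S(x_0)-S(a))\int_{x_0}^{+\infty}u\,dm$, hence $\int_{x_0}^{+\infty}u\,dm<\infty$. Adding the two pieces yields $\int_a^{+\infty}u\,dm<\infty$, i.e. $(3)$.

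The main obstacle is the behaviour of the integrand near $a$ in $(1)\Rightarrow(3)$: the Green kernel $G(a,+\infty,x_0,\xi)=S(\xi)-S(a)$ degenerates to $0$ as $\xi\downarrow a$, so finiteness of $u(x_0)$ gives, via the Kac identity alone, no a priori control of $\int_a^{x_0}u\,dm$. The strong-Markov comparison $u(y)\leq u(x_0)$ on $(a,x_0]$, together with $m(\R)<\infty$, is precisely what closes this gap; every other estimate is a one-line consequence of the explicit form of the Green kernel in \eqref{eq:Kacexp}.
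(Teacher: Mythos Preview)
Your proof is correct and follows essentially the same route as the paper's: both hinge on the exponential Kac identity~\eqref{eq:Kacexp}, the constancy of $G(a,+\infty,x,\xi)=S(x)-S(a)$ for $\xi>x$, and the monotonicity of $\xi\mapsto u(\xi)$ on $]a,\infty[$. The only difference is one of detail: the paper simply asserts that $\E_\bullet\exp(\lambda T_a)$ is increasing, while you supply the standard strong-Markov justification for this fact.
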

\begin{proof}
Observe that $G(a,+\infty,x,\xi)\equiv\const>0$ for $\xi>x$, and that $\E_{\bullet}\exp(\lambda T_a)$ is increasing on $]a,\infty[$. Using the exponential Kac formula we then see that for $x>a$,  $\E_x\exp(\lambda T_a)<\infty$ if and only if $\E_\xi\exp(\lambda T_a)$ is $m$-integrable on $]a,\infty[$. In this case, since $m$ charges every interval of $\R$,  $\E_\xi\exp(\lambda T_a)<\infty$ for all $\xi>a$ by monotonicity of $\E_\xi\exp(\lambda T_a)$.
\end{proof}

\subsection {Exit time from an interval}

It is known for a while (Khasminskii condition, see \cite{FitsKac}) that the exit time $T_{a,b}$ from a bounded interval $[a,b]$ admits an exponential moment for some $\lambda>0$. Let 
$$\lambda_{a,b}=\sup\{\lambda\ :\ \forall x\in]a,b[,\ \E_{x}\exp(\lambda T_{a,b})<\infty\}$$
In this subsection we establish some upper and lower bounds for $\lambda_{a,b}$. The upper bound will be particularly important in the proof of the theorem~\ref{Expmoments}. 

\begin{lem}\label {sortint1}$\lambda_{a,b}\geq 1/C$, where
\begin{equation*}
C=\frac{1}{S(b)-S(a)}\int_a^b(S(b)-S(y))(S(y)-S(a)))\,dm(y)
\end{equation*}
\end{lem}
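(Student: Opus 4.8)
The plan is to bound the exponential moments $\E_x \exp(\lambda T_{a,b})$ by controlling the iterated Green operator $G^n 1$ and summing the series $\sum_n \lambda^n n! G^n 1(x) / n!= \sum_n \lambda^n G^n 1(x)$, which by the Kac formula \eqref{Tnab} equals $\E_x \exp(\lambda T_{a,b})$. First I would observe that $G1(x) = \E_x T_{a,b}$ is a nonnegative continuous function vanishing at $a$ and $b$, and that $\|G1\|_\infty = \sup_{x\in[a,b]} G1(x)$ is exactly the constant $C$ in the statement — indeed $G1(x)=\int_a^b G(a,b,x,y)\,dm(y)$, and one checks from the explicit kernel that this is maximized in $x$ when the two ``arms'' $(S(b)-S(x))$ and $(S(x)-S(a))$ balance; more carefully, since $y\mapsto G(a,b,x,y)$ peaks at $y=x$, one gets $G1(x)\le \frac{1}{S(b)-S(a)}(S(b)-S(x))(S(x)-S(a))\cdot$... — actually the cleanest route is to note that $G1$ solves $-\frac{d}{dm}\frac{d}{dS} u = 1$ with $u(a)=u(b)=0$, so it is concave in the scale $S$, hence its sup is attained at an interior point; but for the lemma I only need an \emph{upper} bound $\|G1\|_\infty \le C$, which I will extract directly.

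The key algebraic step: I claim $C = \sup_x G1(x)$, or at least $\sup_x G1(x) \le C$. Writing $g(x) := G1(x)$, from the kernel,
\begin{align*}
g(x) &= \frac{S(b)-S(x)}{S(b)-S(a)}\int_a^x (S(y)-S(a))\,dm(y) + \frac{S(x)-S(a)}{S(b)-S(a)}\int_x^b (S(b)-S(y))\,dm(y).
\end{align*}
One then bounds each integrand crudely: on $[a,x]$, $S(y)-S(a)\le S(x)-S(a)$ is the wrong direction, so instead I would compare $g(x)$ against the right-hand display defining $C$. In fact $C$ itself equals $\int_a^b G(a,b,y,y)\,dm(y)$ — the diagonal of the Green kernel integrated — and since $G(a,b,x,y)\le G(a,b,y,y)$ for all $x$ (the kernel is maximized along the diagonal in the first variable for fixed $y$), we get $g(x)=\int_a^b G(a,b,x,y)\,dm(y)\le \int_a^b G(a,b,y,y)\,dm(y)=C$. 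That inequality $G(a,b,x,y)\le G(a,b,y,y)$ is immediate from the explicit formula since for $y\le x$, $(S(b)-S(x))\le(S(b)-S(y))$, and for $x\le y$, $(S(x)-S(a))\le(S(y)-S(a))$.

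With $\|g\|_\infty \le C$ in hand, I would prove by induction that $G^n 1(x)\le C^n$ for all $n\ge 1$: assuming $G^{n-1}1 \le C^{n-1}$ pointwise, $G^n 1(x) = \int_a^b G(a,b,x,y)\,G^{n-1}1(y)\,dm(y) \le C^{n-1}\int_a^b G(a,b,x,y)\,dm(y) = C^{n-1} g(x)\le C^n$. Then by \eqref{Tnab},
\begin{equation*}
\E_x \exp(\lambda T_{a,b}) = \sum_{n\ge 0} \frac{\lambda^n}{n!}\,\E_x T_{a,b}^n = \sum_{n\ge 0} \lambda^n\, G^n 1(x) \le \sum_{n\ge 0} (\lambda C)^n < \infty \quad \text{for } \lambda < 1/C,
\end{equation*}
which gives $\lambda_{a,b}\ge 1/C$ as claimed. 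The main obstacle is really just the clean identification/bounding of $C$ as (an upper bound for) $\|G1\|_\infty$; once the pointwise domination $G(a,b,x,y)\le G(a,b,y,y)$ is noted, the rest is a routine geometric-series argument, and the interchange of sum and expectation is justified by monotone convergence since every term is nonnegative.
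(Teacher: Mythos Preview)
Your proposal is correct and follows essentially the same approach as the paper: the key inequality $G(a,b,x,y)\le \frac{(S(b)-S(y))(S(y)-S(a))}{S(b)-S(a)}=G(a,b,y,y)$ is exactly what the paper uses, from which $G1\le C$ and inductively $G^n1\le C^n$, then the geometric series gives the result. The initial meandering about whether $C=\sup_x G1(x)$ exactly is unnecessary --- only the upper bound $\|G1\|_\infty\le C$ is needed, and you extract it the same way the paper does.
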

\begin{proof}
Observe that
\[G(a,b,x,y)\leq \frac{(S(b)-S(y))(S(y)-S(a))}{S(b)-S(a)}.\]
If $f$ is positive and bounded, then
\[Gf(x)\leq \frac{1}{S(b)-S(a)}\int_a^b (S(b)-S(y))(S(y)-S(a))f(y)\,dm(y) \leq C\|f\|_\infty.\]
It follows that $\E_x T_{a,b}^n/n!=G^n 1(x)\leq C^n$, whence $\E_x e^{\lambda T_{a,b}}<\infty$ for $\lambda<1/C$.
\end{proof}

To find an upper bound for $\lambda_{a,b}$, fix some interval $[a',b']\subset ]a,b[.$ Define $\kappa_1>0$ and $\kappa_2>0$ by
\begin{equation}
S(a')-S(a)=\kappa_1(S(b')-S(a')),\quad S(b)-S(b')=\kappa_2(S(b')-S(a'))
\label{eq:kappa}
\end{equation}
and denote
\begin{equation*}
{c}=\frac{\kappa_1\kappa_2}{1+\kappa_1+\kappa_2}(S(b')-S(a'))m([a',b']).
\end{equation*}
\
\begin{lem}\label{sortint}
If $\lambda\geq 1/c$ then for all $x\in[a',b']$,  $\E_xe^{\lambda T_{a,b}}=\infty$. In particular, $\lambda_{a,b} \leq 1/c$ for any choice of $[a',b']$.  
\end{lem}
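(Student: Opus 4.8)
The plan is to produce a lower bound for $\E_x T_{a,b}^n$ that grows like $n!\,c^n$, which forces the exponential moment at $\lambda = 1/c$ to diverge. The starting point is the Kac formula \eqref{Tnab}, $\E_x T_{a,b}^n = n!\,G^n\1(x)$, so it suffices to show $G^n\1(x) \geq c^n$ for $x$ in $[a',b']$. To get a lower bound on the Green operator I would throw away mass outside $[a',b']$: for $x,y \in [a',b']$ one has $G(a,b,x,y) \geq \frac{(S(b)-S(b'))(S(a')-S(a))}{S(b)-S(a)}$, because on the inner square each factor $S(b)-S(x\vee y)$ and $S(x\wedge y)-S(a)$ is at least the corresponding value at the endpoints $b'$ and $a'$ respectively. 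Using \eqref{eq:kappa}, $S(b)-S(b') = \kappa_2(S(b')-S(a'))$, $S(a')-S(a)=\kappa_1(S(b')-S(a'))$, and $S(b)-S(a) = (1+\kappa_1+\kappa_2)(S(b')-S(a'))$, so this constant equals $\frac{\kappa_1\kappa_2}{1+\kappa_1+\kappa_2}(S(b')-S(a')) =: \gamma$, and $c = \gamma\, m([a',b'])$.

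Next I would iterate. Define the restricted operator $\widetilde G f(x) = \int_{[a',b']} G(a,b,x,y) f(y)\,dm(y)$ for $x \in [a',b']$; since $G \geq 0$ everywhere, $G^n\1(x) \geq \widetilde G^n \1(x)$ on $[a',b']$. On $[a',b']$ we have the pointwise bound $G(a,b,x,y)\ge \gamma$, hence for any nonnegative $f$ on $[a',b']$, $\widetilde G f(x) \geq \gamma \int_{[a',b']} f\,dm = \gamma\, m_{[a',b']}(f)$, a constant on $[a',b']$. Starting from $f = \1$ gives $\widetilde G\1 \geq \gamma\, m([a',b']) = c$ on $[a',b']$, and feeding a constant $\kappa \geq 0$ back in gives $\widetilde G(\kappa) \geq \gamma\, m([a',b'])\,\kappa = c\,\kappa$ on $[a',b']$. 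By induction $\widetilde G^n \1 \geq c^n$ on $[a',b']$, so $\E_x T_{a,b}^n \geq n!\, c^n$ for all $x \in [a',b']$. Consequently, if $\lambda \geq 1/c$, then $\E_x e^{\lambda T_{a,b}} \geq \sum_{n\ge0} \frac{\lambda^n}{n!}\E_x T_{a,b}^n \geq \sum_{n\ge0} (\lambda c)^n = +\infty$, which is the first assertion; and since $[a',b'] \subset\, ]a,b[$ was arbitrary, $\lambda_{a,b} \leq 1/c$ for every choice of $[a',b']$.

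The only delicate points are bookkeeping ones rather than conceptual obstacles: verifying the pointwise lower bound $G(a,b,x,y)\ge\gamma$ on the inner square — which just uses monotonicity of $S$ together with the definitions \eqref{eq:kappa} — and making sure the induction genuinely propagates a constant lower bound (it does, precisely because after one application of $\widetilde G$ the resulting function is bounded below by a constant on $[a',b']$, so all later iterates remain constants on that set). I expect the main thing to be careful about is that $m([a',b'])>0$, which holds since $m$ charges every nonempty open set and $[a',b']$ contains the open interval $]a',b'[$; this guarantees $c>0$ so that $1/c$ is finite and the statement is non-vacuous.
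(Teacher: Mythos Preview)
Your proof is correct and follows essentially the same approach as the paper: both establish the pointwise lower bound $G(a,b,x,y)\geq \frac{\kappa_1\kappa_2}{1+\kappa_1+\kappa_2}(S(b')-S(a'))$ on $[a',b']\times[a',b']$, then iterate through the Kac formula to obtain $\E_x T_{a,b}^n\geq n!\,c^n$ for $x\in[a',b']$. The only cosmetic difference is that you package the iteration via the restricted operator $\widetilde G$, whereas the paper writes the induction step directly; the argument is the same.
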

\begin{proof}
Observe that for all $x,y$ in $[a',b']$,
\[G(a,b,x,y)\geq \frac{(S(b)-S(b'))(S(a')-S(a))}{S(b)-S(a)}=\frac{\kappa_1\kappa_2}{1+\kappa_1+\kappa_2}(S(b')-S(a'))\]
 It follows that for all $x\in[a',b']$
\[\E_x T_{a,b}\geq\int_{a'}^{b'}G(a,b,x,y)dm(y)\geq\frac{\kappa_1\kappa_2}{1+\kappa_1+\kappa_2}(S(b')-S(a'))m([a',b'])=c\]
By induction $\E_x T_{a,b}^n\geq n!c^n$, as seen from
\begin{multline*}
\E_x T_{a,b}^n\geq n\int_{a'}^{b'}G(a,b,x,y)\E_y T_{a,b}^{n-1}\,dm(y)\geq\\ n(n-1)!c^{n-1}\int_{a'}^{b'}G(a,b,x,y)\,dm(y)=n!c^n
\end{multline*}
Hence $\E_x e^{\lambda T_{a,b}}=\infty$ pour $\lambda\geq 1/c$ and $x\in[a',b']$.
\end{proof}

In the introduction we have mentioned a theorem of Carmona-Klein~\cite {CaKl}
$$\mbox{``spectral gap''} \Rightarrow \E_x e^{\lambda T_U}<\infty,$$
where $U$ is a set of positive invariant measure. The formulation of this theorem is somewhat confusing, since it does not precise that $\lambda$ depends on~$U$. In fact, the following corollary shows that the property $\E_x e^{\lambda T_a}<\infty$ can not hold simultaneously for all $(x,a)$ with a common $\lambda>0$. 
\begin{cor}\label{cor:nolambda}
$\forall \lambda >0,\ \forall x\in \R$, there exist $a<x$ and $b>x$ such that $\E_xe^{\lambda T_a}=\E_xe^{\lambda T_b}=\infty$.
\end{cor}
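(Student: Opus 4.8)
The plan is to read the corollary as a consequence of Lemma~\ref{sortint}: that lemma already tells us that the exit time $T_{a,b}$ of a bounded interval $]a,b[$ has \emph{no} exponential moment of order $\lambda$ as soon as $\lambda\geq 1/c$, for a constant $c$ that depends on the geometry of $[a,b]$ and of an inner interval $[a',b']$. The point is to turn a statement about $T_{a,b}$ into one about the one-sided hitting times $T_a$ and $T_b$. This is immediate: since $a\notin\,]a,b[$, hitting $a$ forces the process to have already left $]a,b[$, so $T_{a,b}\leq T_a$; likewise $T_{a,b}\leq T_b$. By monotonicity of $t\mapsto e^{\lambda t}$ this gives $\E_x e^{\lambda T_a}\geq\E_x e^{\lambda T_{a,b}}$ and $\E_x e^{\lambda T_b}\geq\E_x e^{\lambda T_{a,b}}$, so it suffices, given $\lambda>0$ and $x$, to produce $a<x<b$ with $x$ inside the inner interval and with the constant $c$ of Lemma~\ref{sortint} satisfying $c\geq 1/\lambda$.

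The mechanism for making $c$ large is to freeze the inner interval around $x$ and let the outer interval swell to all of $\R$. Concretely, fix $a'=x-1$ and $b'=x+1$, so that $x\in[a',b']$, and note that $\delta:=S(b')-S(a')>0$ and $\mu_0:=m([a',b'])>0$ are then fixed positive numbers ($m$ charges every nonempty open set). For $a<a'$ and $b>b'$ the parameters of \eqref{eq:kappa} are $\kappa_1=(S(a')-S(a))/\delta$ and $\kappa_2=(S(b)-S(b'))/\delta$, and $c=\dfrac{\kappa_1\kappa_2}{1+\kappa_1+\kappa_2}\,\delta\,\mu_0$. Because $S(a)\to-\infty$ as $a\to-\infty$ and $S(b)\to+\infty$ as $b\to+\infty$, both $\kappa_1$ and $\kappa_2$ tend to $+\infty$, hence $\dfrac{\kappa_1\kappa_2}{1+\kappa_1+\kappa_2}\to+\infty$ and so $c\to+\infty$. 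Thus for $a$ sufficiently negative and $b$ sufficiently large we have $\lambda\geq 1/c$, and Lemma~\ref{sortint} yields $\E_x e^{\lambda T_{a,b}}=\infty$; combined with the first paragraph this gives $\E_x e^{\lambda T_a}=\E_x e^{\lambda T_b}=\infty$, as required.

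There is essentially no hard step here; the only things to be careful about are that Lemma~\ref{sortint} requires $[a',b']$ to be a \emph{proper} subinterval of $]a,b[$ (i.e.\ $\kappa_1,\kappa_2>0$), which is exactly what the construction $a<a'<x<b'<b$ guarantees and which is also what forces $c\to\infty$, and that the reduction $T_{a,b}\leq T_a\wedge T_b$ uses only the openness of $]a,b[$, not path continuity. One should also record explicitly that the $a$ (resp.\ $b$) produced can be taken strictly less than $x$ (resp.\ strictly greater than $x$), which is automatic since $a<a'=x-1$ and $b>b'=x+1$.
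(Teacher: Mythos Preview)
Your proof is correct and follows the same overall strategy as the paper: invoke Lemma~\ref{sortint} to find $a<x<b$ with $\E_x e^{\lambda T_{a,b}}=\infty$, then use $T_{a,b}\leq T_a\wedge T_b$ to conclude. The only difference is in how you force $c\to\infty$: you freeze the inner interval $[a',b']=[x-1,x+1]$ and send the outer endpoints $a\to-\infty$, $b\to+\infty$ (so $\kappa_1,\kappa_2\to\infty$), whereas the paper instead fixes $\kappa_1=\kappa_2=1$ and enlarges the \emph{inner} interval $[a',b']$ around $x$ until $(S(b')-S(a'))\,m([a',b'])>3/\lambda$, the outer interval then being determined by~\eqref{eq:kappa}. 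Both mechanisms exploit the same degree of freedom in Lemma~\ref{sortint} and yield the result with equal ease; your version has the mild advantage of making the role of $S(\pm\infty)=\pm\infty$ explicit, while the paper's version needs only one limit rather than two simultaneous ones.
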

\begin{proof}
Fix $\lambda>0$ and $x\in\R$. Put, for example, $\kappa_1=\kappa_2=1$ and chose $[a',b']$ and $[a,b]$ in such a way that $x\in[a',b']\subset]a,b[$ and the equalities~\eqref{eq:kappa} hold. Then
\[\frac{1}{c}=\frac{3}{(S(b')-S(a'))m([a',b'])}<\lambda\]
as soon as $(S(b')-S(a'))m([a',b'])>3\lambda$, which can always be achieved taking $a'$ or $b'$ large enough. Hence, according to lemma~\ref{sortint},  $\E_x e^{\lambda T_{a,b}}=\infty$ and thereby $\E_x e^{\lambda T_{a}}= \E_x e^{\lambda T_{b}}=\infty$ for such $a$ and $b$.
\end{proof}

\subsection {Hitting time} 
In this subsection we will prove the theorem~\ref{Expmoments}:
\[
\frac{1}{4B_a^+}\leq\lambda_a^+\leq \frac{1}{B_a^+}\quad\mbox{and}\quad \frac{1}{4B_a^-}\leq\lambda_a^-\leq \frac{1}{B_a^-}
\]
The proofs of two parts being completely similar, we only give one for $\lambda_a^+$. It will be split in a number of propositions.

\begin{prop}\label {momentinfini}
$\forall a\in\R,\ \lambda_a^+\leq \frac{1}{B_a^+}$, where $1/\infty=0$.
\end{prop}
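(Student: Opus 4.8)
The plan is to show that if $\lambda > 1/B_a^+$ then $\E_x e^{\lambda T_a} = \infty$ for $x > a$, which gives $\lambda_a^+ \le 1/B_a^+$. The natural tool is Lemma~\ref{sortint}: an upper bound on the exponential integrability of exit times $T_{a',b'}$ from bounded intervals contained in $]a,+\infty[$, combined with the trivial domination $T_{a',b'} \le T_a$ when $a < a' < x < b'$ (since exiting $]a',b'[$ through $a'$ happens before hitting $a$, and exit through $b'$ only makes $T_{a',b'}$ even smaller relative to $T_a$ — more carefully, on the event that $X$ hits $a$, it must first leave $]a',b'[$, so $T_{a',b'} \le T_a$ pointwise). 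Hence $\E_x e^{\lambda T_a} \ge \E_x e^{\lambda T_{a',b'}}$, and it suffices to find, for any $\lambda > 1/B_a^+$, a bounded interval $[a',b'] \subset ]a,+\infty[$ with $x \in [a',b']$ such that the constant $c$ of Lemma~\ref{sortint} satisfies $1/c \le \lambda$, i.e. $c \ge 1/\lambda$.

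The key step is the choice of $[a',b']$ exploiting the definition of $B_a^+ = \sup_{y \ge a} m(]y,+\infty[)(S(y)-S(a))$. Fix $\lambda > 1/B_a^+$; then there is $y_0 > a$ (one can take $y_0 \ge x$ without loss, enlarging if necessary) with $m(]y_0,+\infty[)(S(y_0)-S(a)) > 1/\lambda$. I would then set $a' = y_0$ (or a point just below $x$ if $x < y_0$ — but choosing $y_0 \ge x$ avoids this), and let $b' \to +\infty$. In the limit, the parameters $\kappa_1, \kappa_2$ from \eqref{eq:kappa} behave as follows: $\kappa_1 = (S(a')-S(a))/(S(b')-S(a')) \to 0$ and $\kappa_2 = (S(b)-S(b'))/(S(b')-S(a'))$, so the prefactor $\kappa_1\kappa_2/(1+\kappa_1+\kappa_2)$ times $(S(b')-S(a'))$ tends to $(S(a')-S(a)) \cdot \kappa_2/(1+\kappa_2)$, and with $b$ chosen so that $\kappa_2$ is large (say $b$ much larger than $b'$, or simply take $\kappa_2 \to \infty$ as well), this approaches $S(a')-S(a)$. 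Meanwhile $m([a',b']) \to m(]a',+\infty[)$. So the constant $c$ can be made as close as desired to $(S(a')-S(a)) m(]a',+\infty[) = (S(y_0)-S(a))m(]y_0,+\infty[) > 1/\lambda$. Therefore for suitable finite $a', b', b$ we get $c > 1/\lambda$, hence $1/c < \lambda$, and Lemma~\ref{sortint} gives $\E_x e^{\lambda T_{a,b}} = \infty$, whence $\E_x e^{\lambda T_a} = \infty$.

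The main obstacle I anticipate is bookkeeping the two-sided limit in $b'$ and $b$: one must take $b' \to \infty$ to make $\kappa_1 \to 0$ so that the "lost mass" near $a'$ is negligible, while simultaneously ensuring $\kappa_2$ stays bounded below (or also grows) so the $\kappa_2/(1+\kappa_2)$ factor does not kill the bound — this is arranged by choosing $b$ far beyond $b'$. A clean way to organize this: first prove that $\sup$ over bounded $[a',b'] \subset ]a,+\infty[$ and $[a,b] \supset [a',b']$ of the constant $c$ in Lemma~\ref{sortint} equals $B_a^+$ (the $\le$ direction is a direct estimate $c \le (S(b')-S(a'))m([a',b']) \cdot \frac{\kappa_1\kappa_2}{1+\kappa_1+\kappa_2} \le (S(a')-S(a))m(]a',+\infty[) \le B_a^+$ after bounding each factor; the $\ge$ direction is the limiting construction above). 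Then $\lambda_{a,b} \le 1/c$ from Lemma~\ref{sortint}, and combining with $T_{a',b'} \le T_a$ on $\{T_a < \infty\}$ yields $\lambda_a^+ \le \sup c^{-1} \le 1/B_a^+$. The only subtlety in the domination $T_{a',b'} \le T_a$ is the case $x \notin [a',b']$, which is why we take $a' < x$; if no such configuration is available because $B_a^+$ is attained only at points $< x$, one still has freedom since $m(]y,+\infty[)(S(y)-S(a))$ for $y \le x$ is bounded by the same expression evaluated appropriately — but in fact enlarging $y_0$ to exceed $x$ only helps since we may also just place $x$ inside $[a',b']$ by choosing $a'$ slightly less than $\min(x,y_0)$ at negligible cost to the mass term. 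I would present this last point carefully to avoid a gap.
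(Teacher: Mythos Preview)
Your approach is essentially the paper's: invoke Lemma~\ref{sortint} for nested intervals $[a',b']\subset]a,b[\subset]a,\infty[$, pass to a limit to make the constant $c$ approach $(S(a')-S(a))\,m(]a',\infty[)$, and conclude via $T_{a,b}\le T_a$. Two simplifications in the paper remove the difficulties you flag. First, the paper keeps $a',b'$ \emph{fixed} and sends only $b\to\infty$ (i.e.\ $\kappa_2\to\infty$), which already yields $c\to(S(a')-S(a))\,m([a',b'])$; the supremum over $a',b'$ is then taken afterwards and equals $B_a^+$ by continuity of $S$ --- no double limit in $b'$ and $b$ is needed. Second, the paper does not try to place a given $x$ inside $[a',b']$: once $\E_x e^{\lambda T_a}=\infty$ is established for \emph{some} $x\in[a',b']$, the all-or-none Proposition~\ref{allornothing} gives it for every $x>a$, so your casework on whether $y_0\gtrless x$ (and the incorrect claim that one may enlarge $y_0$ ``without loss'') is unnecessary. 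Finally, note a notational slip: the domination you need is $T_{a,b}\le T_a$, not $T_{a',b'}\le T_a$; Lemma~\ref{sortint} concerns $T_{a,b}$, with $[a',b']$ only the set of starting points on which the conclusion holds.
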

\begin{proof}
Fix some $a\in\R$. From the lemma \eqref {sortint} we deduce that for $a<a'< x< b'<b$, for all $x\in[a',b']$,   $\E_x e^{\lambda T_{a}}=\infty$ if $\lambda \geq 1/c$, where
\[\frac{1}{c}=\frac{1+\kappa_1+\kappa_2}{\kappa_1\kappa_2(S(b')-S(a'))m([a',b'])}.\]
Now fix $a'$ and $b'$ and make $k_2\to\infty$ (so $b\to\infty$), then
\[\frac{1}{c}\to \frac{1}{\kappa_1(S(b')-S(a'))m([a',b'])}=\frac{1}{(S(a')-S(a))m([a',b'])}.\]
We conclude that $\E_x e^{\lambda T_{a}}=\infty$ for $x\in [a',b']$ and
\[\lambda>\frac{1}{(S(a')-S(a))m([a',b'])}.\]
It follows by the ``all-or-none'' proposition~\ref{allornothing} that $\E_x e^{\lambda T_{a}}=\infty$ for any $x>a$, all $a'$, $b'$ and $\lambda$ as above. Observing that
\[\sup_{a',b'}(S(a')-S(a))m([a',b'])=\sup_{a'}(S(a')-S(a))m([a',\infty[)= B_a^+\]
by the continuity of $S$, we get $\E_x e^{\lambda T_{a}}=\infty$ for any
\[\lambda>\inf_{a',b'}\frac{1}{(S(a')-S(a))m([a',b'])}=\frac{1}{B_a^+}.\]
The inequality $\lambda_a^+\leq {1}/{B_a^+}$ is thereby proved.
\end{proof}

The lower bound $\lambda_a^+\geq(4B_a^+)^{-1}$ requires more work. To simplify the notations we put $B_a^+=B$.

Define for $x>a$ and $f\geq 0$  two  positive linear operators, $J$ and $K$:
\begin{eqnarray*}
Jf(x)=(S(x)-S(a))\int_x^\infty f(y)dm(y)\\ Kf(x)=\int_a^x(S(y)-S(a))f(y)dm(y)
\end{eqnarray*}
where $\int_x^y$ is understood as $\int_{]x,y]}$. Notice that $G=J+K$.

\begin{prop}\label{algebre}
We have
\[\frac{1}{n!}\E_x T_a^n\leq  \sum_{l=0}^n a_{n,l} B^l K^{n-l} 1(x),\]
where $a_{k,l}\geq 0$ satisfy
\[a_{k,l}=0\ \mbox{if}\quad l<0\ \mbox{or}\ l>k,\quad a_{0,0}=1,\quad a_{k+1,l}=\sum_{i\leq l} a_{k,i}\]
\end{prop}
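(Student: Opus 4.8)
The plan is to prove the operator inequality by induction on $n$, using the Kac recursion $\frac{1}{n!}\E_x T_a^n = G\bigl(\frac{1}{(n-1)!}\E_\cdot T_a^{n-1}\bigr)(x)$, which follows from \eqref{eq:Kacpoly}, together with the decomposition $G = J + K$. The crucial auxiliary fact I would isolate first is that the operator $J$ acts nicely on the iterates $K^j 1$: more precisely, I expect the estimate
\[
J\bigl(K^j 1\bigr)(x) \leq B \cdot K^j 1(x)
\]
for all $j\geq 0$ and all $x>a$. To see this, note $K^j 1(x)$ is an increasing function of $x$ on $]a,\infty[$ (each application of $K$ integrates a nonnegative kernel against an increasing function, producing an increasing function, and $K^0 1 = 1$ is trivially increasing). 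Hence
\[
J\bigl(K^j 1\bigr)(x) = (S(x)-S(a))\int_x^\infty K^j 1(y)\,dm(y),
\]
and one wants to bound $(S(x)-S(a))\int_x^\infty (\cdots)\,dm$ by $B$ times the value of $K^j 1$ — but here one must be a little careful, since the naive bound pulls out $K^j1(y)$ at $y\geq x$, not at $x$. The honest inequality I would actually use is $J(K^j 1)(x)\leq B_a^+ \cdot (\text{something increasing dominating }K^j1)$; I need to check that the recursion still closes. Let me instead set up the induction directly on the stated bound, which sidesteps this.

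The induction hypothesis is $\frac{1}{n!}\E_x T_a^n \leq \sum_{l=0}^n a_{n,l} B^l K^{n-l}1(x)$. Applying $G = J+K$ to both sides via the Kac recursion,
\[
\frac{1}{(n+1)!}\E_x T_a^{n+1} \leq \sum_{l=0}^n a_{n,l} B^l\,(J+K)\bigl(K^{n-l}1\bigr)(x)
= \sum_{l=0}^n a_{n,l} B^l\,J\bigl(K^{n-l}1\bigr)(x) + \sum_{l=0}^n a_{n,l} B^l K^{n-l+1}1(x).
\]
The second sum is already of the desired form with indices shifted. For the first sum, the key claim $J(K^{n-l}1)(x)\leq B\, K^{n-l}1(x)$ — or, if that precise inequality fails, the weaker $J(K^j 1)(x) \leq B \sum_{i=0}^{j} c_i K^i 1(x)$ with suitable constants — converts $B^l J(K^{n-l}1)$ into $B^{l+1}$ times iterates of $K$ of order $\leq n-l$. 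Combining and matching coefficients of $B^l K^{n+1-l}1(x)$ then yields exactly the recursion $a_{n+1,l} = \sum_{i\leq l} a_{n,i}$, with $a_{n+1,l}=0$ for $l<0$ or $l>n+1$ and $a_{0,0}=1$. The base case $n=0$ is $\frac{1}{0!}\E_x T_a^0 = 1 = a_{0,0} B^0 K^0 1(x)$, which holds.

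The main obstacle, as flagged above, is the treatment of the operator $J$: the bound $J(K^j1)(x)\le B\, K^j1(x)$ is exactly where the definition $B = B_a^+ = \sup_{x\geq a}\m(]x,+\infty[)(S(x)-S(a))$ enters, and it is not quite immediate because $J$ evaluates the integrand at points $y\geq x$ where $K^j1$ is larger. I would resolve this by establishing the monotonicity of $x\mapsto K^j1(x)$ carefully, then arguing that although $\int_x^\infty K^j1(y)\,dm(y) \geq K^j1(x)\,\m(]x,\infty[)$ would go the wrong way, the correct move is to bound $(S(x)-S(a))\,dm(y)$-mass against $B_a^+$ at the variable point $y$: writing $S(x)-S(a)\le S(y)-S(a)$ for $y\ge x$ and then using $(S(y)-S(a))\m(]x,\infty[) \leq (S(y)-S(a))\m(]y,\infty[) + (S(y)-S(a))\m(]x,y])$, the first term is $\le B$ and the second gets absorbed into a $K$-type expression. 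This bookkeeping is precisely what produces the combinatorial coefficients $a_{k,l}$ rather than a clean single term, and it is the step I expect to require the most care; everything else is routine manipulation of the Kac recursion and index shifting.
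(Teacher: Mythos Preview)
Your overall architecture is correct: Kac recursion plus $G=J+K$, induction on $n$, and the recognition that the $J$-term is where $B=B_a^+$ enters and where the combinatorial recursion on the $a_{n,l}$ is generated. But the main engine of the argument is missing, and your attempted resolution in the last paragraph does not close.

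The paper does not try to bound $J(K^j1)(x)$ directly in terms of $K^i1(x)$. Instead it proves the clean operator inequality
\[
JKf \leq B(J+K)f
\]
for every nonnegative measurable $f$. This is obtained by writing
\[
JKf(x)=(S(x)-S(a))\int_x^\infty dm(y)\int_a^y (S(u)-S(a))f(u)\,dm(u),
\]
splitting the inner integral at $u=x$, and treating the two pieces separately. The piece $\int_a^x$ factors out as $Kf(x)$ times $(S(x)-S(a))\,m(]x,\infty[)\leq B$. For the piece $\int_x^y$, Fubini turns it into $(S(x)-S(a))\int_x^\infty f(u)\,dm(u)\,(S(u)-S(a))\,m([u,\infty[)$, and the factor $(S(u)-S(a))\,m([u,\infty[)$ is again $\leq B$, leaving $B\,Jf(x)$. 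Iterating $JK\leq B(J+K)$ gives
\[
JK^j1 \leq B\sum_{i=0}^{j} B^{\,j-i}K^i1,
\]
which is the ``weaker'' inequality you guessed at, with explicit constants. Plugging this into the induction step and collecting powers of $B$ and of $K$ produces exactly the recursion $a_{n+1,l}=\sum_{i\leq l}a_{n,i}$.

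Your final paragraph goes the wrong way. Replacing $S(x)-S(a)$ by $S(y)-S(a)$ inside $\int_x^\infty(\cdots)\,dm(y)$ gives $\int_x^\infty (S(y)-S(a))K^j1(y)\,dm(y)$, which is the tail $K^{j+1}1(\infty)-K^{j+1}1(x)$ and has no useful bound; your subsequent splitting of $m(]x,\infty[)$ does not recover a closed recursion either. The correct trick is the one above: leave $J$'s prefactor $S(x)-S(a)$ alone, unfold one $K$, split the $K$-integral at the outer variable, and apply Fubini.
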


\begin{proof} Using the polynomial Kac formula we see that
\begin{equation*}
\frac{1}{n!}E_x T_a^n= (J+K)^n 1(x).
\end{equation*}
$J$ and $K$ do not commute, hence to handle the above expression we will firstly prove that 
\begin{equation*}
 JKf(x)\leq B(J+K)f(x).
\end{equation*}
for any positive measurable $f$. Indeed,
\begin{multline*}
JKf(x)=(S(x)-S(a))\int_x^\infty  \,d\m(y)\int_a^y(S(u)-S(a))f(u)\,d\m(u)\\
=(S(x)-S(a))\int_x^\infty  \,d\m(y)\int_a^x(S(u)-S(a))f(u)\,d\m(u)\\
+(S(x)-S(a))\int_x^\infty  \,d\m(y)\int_x^y(S(u)-S(a))f(u)\,d\m(u)\\
\leq BKf(x)+(S(x)-S(a))\int_x^\infty f(u)\,d\m(u) (S(u)-S(a))\int_u^\infty \,d\m(y)\\
\leq BKf(x)+BJf(x)
\end{multline*}
By induction, we can easily see that the following inequality holds for all $n\in\N$: 
\begin{equation*}
JK^n 1\le B(B^n+B^{n-1}K 1+\ldots+K^n 1).
\end{equation*}
Now, to prove the proposition we proceed by induction over $n$. For $n=0$ we have $\E_x 1=1=a_{0,0}$. Suppose that
\[\frac{1}{n!}\E_x T_a^{n}\leq  \sum_{l=0}^n a_{n,l} B^l K^{n-l} 1(x),\]
then
\begin{multline*}
\frac{1}{(n+1)!}\E_x T_a^{n+1}=\frac{1}{n!}(J+K)\E_\bullet T_a^n(x)\leq (J+K) \sum_{l\geq 0} a_{n,l}B^l K^{n-l} 1(x)\\
=J\sum_{l\geq 0} a_{n,l}B^l K^{n-l} 1(x)+\sum_{l\geq 0} a_{n,l}B^l K^{n-l+1} 1(x)\\
\leq B\sum_{l\geq 0} a_{n,l}B^{l}\sum_{i=0}^{n-l} B^{n-l-i} K^{i} 1(x) + \sum_{l\geq 0} a_{n,l}B^{l}K^{n-l+1} 1(x)\\
=\sum_{i\geq 0} B^{n+1-i} K^i 1(x) \sum_{l\leq n-i} a_{n,l}+ \sum_{i\geq 0} a_{n,n+1-i}B^{n+1-i}K^{i} 1(x)\\
=\sum_{i\geq 0} B^{n+1-i} K^i 1(x) \sum_{l\leq n+1-i} a_{n,l}=\sum_{j\geq 0} B^{j} K^{n+1-j} 1(x) \sum_{l\leq j} a_{n,l}
\end{multline*}
\end{proof}

An explicit formula for $a_{n,l}$ can be derived, but such a refinement would not improve the estimations we are aiming to obtain.

\begin{lem}\label {ank}
$\sum_{k=0}^n a_{n,k}=a_{n+1,n}\leq 4^n$
\end{lem}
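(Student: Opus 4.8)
The plan is to analyze the combinatorial recursion $a_{k+1,l}=\sum_{i\le l}a_{k,i}$ together with the boundary conditions $a_{0,0}=1$ and $a_{k,l}=0$ for $l<0$ or $l>k$. First I would record the identity $\sum_{k=0}^{n}a_{n,k}=a_{n+1,n}$, which is immediate: applying the recursion with $l=n$ gives $a_{n+1,n}=\sum_{i\le n}a_{n,i}=\sum_{k=0}^{n}a_{n,k}$, since $a_{n,i}=0$ for $i>n$. So the whole content of the lemma is the bound $a_{n+1,n}\le 4^n$, equivalently $S_n:=\sum_{k=0}^{n}a_{n,k}\le 4^n$.

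Next I would look for a recursion purely in terms of the row sums $S_n$, or of the full array, that exhibits the factor $4$. The natural move is to sum the recursion over $l$: $S_{n+1}=\sum_{l=0}^{n+1}a_{n+1,l}=\sum_{l=0}^{n+1}\sum_{i\le l}a_{n,i}=\sum_{i=0}^{n}a_{n,i}(n+2-i)=\sum_{i=0}^{n}(n+2-i)a_{n,i}$. This still involves the individual entries weighted by $n+2-i$, not just $S_n$, so a one-line induction on $S_n$ alone will not close. The cleanest fix is to introduce the \emph{partial} row sums $T_{n,l}=\sum_{i\le l}a_{n,i}=a_{n+1,l}$ and observe that the array $(a_{n,l})$ is nothing but iterated partial summation: $a_{n,l}$ counts lattice paths / is a ballot-type number. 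Concretely, $a_{n+1,l}=a_{n,l}+a_{n+1,l-1}$ (splitting $\sum_{i\le l}$ as $\sum_{i\le l-1}+\{i=l\}$ and using $a_{n+1,l-1}=\sum_{i\le l-1}a_{n,i}$), which is exactly the Catalan/ballot triangle recursion. From known bounds on such numbers one gets $a_{n+1,n}\le \binom{2n}{n}\le 4^n$.

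Since I should avoid invoking outside combinatorial identities, the self-contained route I would actually write is an induction proving the two-variable bound $a_{n,l}\le \binom{n+l}{l}$ (or the slightly weaker $a_{n,l}\le 2^{n+l}$, which already suffices). For the base case $a_{0,0}=1=\binom{0}{0}$. For the step, using $a_{n+1,l}=a_{n,l}+a_{n+1,l-1}$ and the induction hypothesis on both right-hand terms, Pascal's rule $\binom{n+l}{l}+\binom{n+l}{l-1}=\binom{n+l+1}{l}$ gives $a_{n+1,l}\le\binom{n+1+l}{l}$; one has to handle the edge cases $l=0$ (where $a_{n+1,0}=a_{n,0}=\cdots=1$) and $l=n+1$ (where $a_{n+1,n+1}=\sum_{i\le n+1}a_{n,i}=\sum_{i\le n}a_{n,i}=a_{n+1,n}$, reducing to the $l=n$ entry). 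Then $a_{n+1,n}\le\binom{2n+1}{n}$, and since $\binom{2n+1}{n}=\tfrac12\binom{2n+2}{n+1}\le\tfrac12\cdot 4^{n+1}=2\cdot 4^{n}$ — hmm, that gives only $2\cdot 4^n$, so I would instead prove the sharper $a_{n,l}\le\binom{n+l}{l}\cdot\frac{n-l+1}{n+1}$ (the ballot numbers) or simply note $a_{n+1,n}=\sum_{k=0}^n a_{n,k}\le\sum_{k=0}^n\binom{n+k}{k}=\binom{2n+1}{n}$ and tighten. The cleanest is the direct induction $S_{n+1}\le 4 S_n$: from $S_{n+1}=\sum_{i=0}^n(n+2-i)a_{n,i}$ and the companion quantity $U_n:=\sum_{i=0}^n(n+1-i)a_{n,i}$, one checks $S_{n+1}=S_n+U_n$ and $U_{n+1}=\sum_i\binom{n+2-i}{2}a_{n,i}\le\dots$, closing the induction with a ratio bounded by $4$ for all $n$. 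The main obstacle is precisely this: getting the constant down to exactly $4^n$ rather than a weaker exponential bound, which forces one to track a two-parameter quantity (partial sums, or row sums together with a weighted variant) rather than $S_n$ alone, and to verify the induction ratio never exceeds $4$ including small $n$.
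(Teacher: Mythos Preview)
Your proposal is more an exploration than a proof: you try several routes (binomial bounds, ballot numbers, a coupled induction on $(S_n,U_n)$) but none is actually closed at the constant $4^n$. The binomial bound $a_{n,l}\le\binom{n+l}{l}$ lands you at $a_{n+1,n}\le\binom{2n+1}{n}\le 2\cdot 4^n$, as you yourself note, and the final $(S_n,U_n)$ scheme is left as an unverified sketch (``closing the induction with a ratio bounded by $4$'').

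Ironically, the idea you mention parenthetically and then discard is essentially the paper's proof, modulo a small sharpening of the exponent. The paper shows by induction on $n\ge 1$ that $a_{n,k}\le 2^{n+k-1}$: at $n=1$ one has $a_{1,0}=a_{1,1}=1$; and assuming the bound at level $n$, the geometric sum gives
\[
a_{n+1,k}=\sum_{i=0}^{k}a_{n,i}\le\sum_{i=0}^{k}2^{n+i-1}=2^{n-1}(2^{k+1}-1)<2^{n+k}.
\]
Then $a_{n+1,n}\le 2^{(n+1)+n-1}=2^{2n}=4^n$ (and $n=0$ is checked directly). Your instinct that a two-parameter bound is required was correct; the point you missed is that the crude exponential $2^{n+k-1}$ already does the job, because the partial geometric series telescopes exactly into the next exponent. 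No ballot numbers, no Pascal identity, no auxiliary weighted sums are needed.
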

\begin{proof}
We firstly prove by induction that $a_{n,k}\leq 2^{n+k-1}$ for $n\geq 1$. For $n=1$ we have $a_{1,0}=a_{1,1}=1$, so the inequality is satisfied. Further,
\[a_{n+1,k}=\sum_{i\leq k}a_{n,i}\leq \sum_{i=0}^k 2^{n+i-1}\leq 2^{n+k}\]
We deduce  that $\sum_{k=0}^n a_{n,k}=a_{n+1,n}\leq 4^n$, the inequality being true also for $n=0$.
\end{proof}

\begin{prop}\label{prop:4B}
For  $0\leq\lambda<(4B)^{-1}$ and all $x>a,$ it holds:
\[\int_a^{+\infty}\E_x e^{\lambda T_a}d\m(x)\leq \m(]a,\infty[)\left(1-\frac{\lambda}{4B_a^+}\right)^{-1}.\]
Moreover, in this case
\[\E_x e^{\lambda T_a}\leq \frac{\m(]a,\infty])}{\m(]x,\infty[)}\left(1-\frac{\lambda}{4B}\right)^{-1}.\] 
\end{prop}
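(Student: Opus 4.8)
The plan is to combine the operator bound from Proposition~\ref{algebre}, the combinatorial estimate $\sum_{k=0}^n a_{n,k}\le 4^n$ from Lemma~\ref{ank}, and a simple computation of the iterates $K^j\1$ on the measure $m$. First I would write, using Proposition~\ref{algebre} and summing over $n$,
\[
\E_x e^{\lambda T_a}=\sum_{n\ge 0}\frac{\lambda^n}{n!}\E_x T_a^n\le \sum_{n\ge 0}\lambda^n\sum_{l=0}^n a_{n,l}B^l K^{n-l}\1(x).
\]
The key point is then to integrate this against $dm$ on $]a,\infty[$ and to show that $\int_a^\infty K^{j}\1(x)\,dm(x)\le B^{j}\,m(]a,\infty[)$ for every $j\ge 0$. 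Indeed, $K\1(x)=\int_a^x (S(y)-S(a))\,dm(y)$, and by Fubini $\int_a^\infty K\1(x)\,dm(x)=\int_a^\infty (S(y)-S(a))\,m(]y,\infty[)\,dm(y)\le B\,m(]a,\infty[)$ by the very definition of $B=B_a^+$. Iterating, $\int_a^\infty K^{j}\1(x)\,dm(x)\le B\int_a^\infty K^{j-1}\1(x)\,dm(x)$, since at each step one pulls out a factor $(S(y)-S(a))m(]y,\infty[)\le B$; this gives the claimed bound by induction on $j$.

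Next I would plug this in: writing $j=n-l$,
\[
\int_a^\infty \E_x e^{\lambda T_a}\,dm(x)\le m(]a,\infty[)\sum_{n\ge 0}\lambda^n\sum_{l=0}^n a_{n,l}B^l B^{n-l}
= m(]a,\infty[)\sum_{n\ge 0}(\lambda B)^n\sum_{l=0}^n a_{n,l}.
\]
By Lemma~\ref{ank}, $\sum_{l=0}^n a_{n,l}\le 4^n$, so the series is dominated by $\sum_{n\ge 0}(4\lambda B)^n=(1-4\lambda B)^{-1}$ whenever $\lambda<(4B)^{-1}$, which is exactly the first assertion, $\int_a^\infty \E_x e^{\lambda T_a}\,dm(x)\le m(]a,\infty[)\,(1-\lambda/(4B))^{-1}$.

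For the pointwise bound, I would return to the exponential Kac formula \eqref{eq:Kacexp}: for $x>a$,
\[
\E_x e^{\lambda T_a}=1+\lambda\int_a^\infty G(a,+\infty,x,\xi)\,\E_\xi e^{\lambda T_a}\,dm(\xi).
\]
Since $G(a,+\infty,x,\xi)\le S(x)-S(a)\le$ a constant depending only on $x$, and more to the point $G(a,+\infty,x,\xi)=S(x)-S(a)$ for $\xi\ge x$ while for $a\le\xi\le x$ one has $G(a,+\infty,x,\xi)=S(\xi)-S(a)\le S(x)-S(a)$, one gets $\E_x e^{\lambda T_a}\le 1+\lambda(S(x)-S(a))\int_a^\infty \E_\xi e^{\lambda T_a}\,dm(\xi)$; but I actually want the cleaner constant $m(]a,\infty[)/m(]x,\infty[)$, so instead I would use monotonicity of $\xi\mapsto\E_\xi e^{\lambda T_a}$ together with the integral bound just proved: $m(]x,\infty[)\,\E_x e^{\lambda T_a}\le \int_x^\infty \E_\xi e^{\lambda T_a}\,dm(\xi)\le \int_a^\infty \E_\xi e^{\lambda T_a}\,dm(\xi)\le m(]a,\infty[)(1-\lambda/(4B))^{-1}$, and dividing by $m(]x,\infty[)>0$ finishes the proof.

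The main obstacle I anticipate is justifying the term-by-term integration of the series (Fubini/Tonelli), but since every summand is nonnegative this is automatic, so the only genuine computational content is the inductive estimate $\int_a^\infty K^{j}\1\,dm\le B^{j}m(]a,\infty[)$ — and there the crucial observation is simply that the factor one extracts at each stage, $(S(y)-S(a))\,m(]y,\infty[)$, is bounded by $B_a^+$ by definition, which is exactly why $B_a^+$ is the right quantity. Some care is also needed that all manipulations stay within $]a,\infty[$ and that the monotonicity of $\E_\bullet e^{\lambda T_a}$ on $]a,\infty[$ (already noted in the proof of Proposition~\ref{allornothing}) is invoked correctly.
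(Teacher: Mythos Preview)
Your argument is correct and follows essentially the same route as the paper: establish $\|Kf\|\le B\|f\|$ (equivalently your inductive $\int K^{j}\1\,dm\le B^{j}m(]a,\infty[)$) via Fubini and the definition of $B$, combine with Proposition~\ref{algebre} and Lemma~\ref{ank} to bound the integrated moments by $4^nB^n m(]a,\infty[)$, sum, and then use monotonicity of $\xi\mapsto\E_\xi e^{\lambda T_a}$ for the pointwise estimate. One cosmetic point: the geometric series you obtain is $(1-4\lambda B)^{-1}$, not $(1-\lambda/(4B))^{-1}$ --- the latter is a typo in the displayed statement which you have carried over.
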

\begin{proof}
The proposition being obviously true for $B=\infty$, we can suppose that $B<\infty$.

For positive function $f$ denote $\|f\|=\int_{]a,\infty[}f(x)\,d\m(x)$. Observe that
$\|Kf\|\leq B\|f\|$. Indeed, let $f\geq 0$, then
\begin{multline*}
\|Kf\|=\int_a^\infty \, d\m(x)\int_a^x(S(y)-S(a))f(y)d\m(y)=\\
\int_a^\infty dy f(y)\m(y)(S(y)-S(a)) \int_y^\infty \,d\m(x)\leq B\|f\|
\end{multline*}
Combining this inequality with the proposition~\ref {algebre} and lemma~\ref{ank}, we can write
\begin{multline*} 
\frac{1}{n!}\int_a^{+\infty}\E_x T_a^n d\m(x)\leq  \sum_{l=0}^n a_{n,l} B^l \|K^{n-l} 1\|\leq\\
 \sum_{l=0}^n a_{n,l} B^l B^{n-l} \|1\|\leq  4^n B^n \m(]a,\infty[),
\end{multline*}
which implies the first assertion.

The bound on $\E_xe^{\lambda T_a}$ follows by its monotonicity in $x$ from the following estimation: $\forall x >a$,
$$ \E_x e^{\lambda T_a} \m(]x,+\infty[)\leq \int_x^{+\infty}\E_y T_a^n d\m(y)
\leq \int_a^{+\infty}\E_y T_a^n d\m(y).$$
\end{proof}

Finally, the propositions~\ref{momentinfini} and~\ref{prop:4B} jointly imply the assertion of theorem~\ref{Expmoments}, namely
\[\frac{1}{4B_a^+}\leq\lambda_a^+\leq \frac{1}{B_a^+}\quad\mbox{and}\quad \frac{1}{4B_a^-}\leq\lambda_a^-\leq \frac{1}{B_a^-},\]
the inequalities concerning $\lambda_a^-$ being proved in the same way. 

\begin{rem}
It is easy to see that 
\[\exists a\in\R,\ B_a^+<\infty\iff\forall a\in\R,\ B_a^+<\infty.\]
So the theorem~\ref{Expmoments} yields yet another ``all-or-none'' property:
\[\exists a\in\R,\ \lambda_a^+>0\iff\forall a\in\R,\ \lambda_a^+>0,\]
the same being true for $B_a^-$, $\lambda_a^-$.
The corollary~\ref{cor:nolambda} implies, however, that
\[\lim_{a\to\infty}\lambda_a^-=\lim_{a\to-\infty}\lambda_a^+=0.\]
\end{rem}

\section {Hardy and Poincar\'e inequalities}\label{sec:poincare}
In this section we will see that $B_a^\pm$ also play an important role in another context. Our exposition follows essentially the lines of Malrieu and Roberto~\cite{MalRob}, but in more general setting.

Suppose (as above) that $S(x)$ is a strictly increasing continuous function on $\R$, with $\lim_{x\to\pm\infty}=\pm\infty$. Suppose also that $m$ is a positive Borel measure on $\R$, with $m(\R)<\infty$. In this section we do note assume that $S$ and $m$ are the scale function and the speed measure of some process (though they can be). 

Denote by $dS$ the measure induced by $S(x)$. Let $F(x)$ be a real function on $\R$. We shall write $dF\ll dS$, if there exists a function $f(x)$ in $\mathbb{L}^1(dS)$ such that 
$$\int_a^bf(x) dS(x) = F(b)-F(a),\ \forall a<b$$
The function $f(x)$ will be denoted $\frac{dF}{dS}(x)$. Introduce then the function space
\begin{equation}\label{defF}
{\cal F}=\left\{F\in L^2(\m): \quad dF\ll dS, \ \frac {dF}{dS}\in L^2(dS)\right\}.
\end{equation}

Unlike in~\cite{MalRob}, we do not assume that $dS$ and $m$ are absolutely continuous with respect to the Lebesgue measure.

\subsection{Hardy inequality}
For $a\in\R$ and $0\leq A\leq \infty$, we shall call the following inequalities Hardy inequalities associated with the function space $\mathcal{F}$, over the upper half state space $]a,\infty[$ and the lower half state space $]-\infty,a[$, with the constant $A$:

\begin {equation}\label{eq:Hardy}
\begin{split}
\int_{a}^\infty (F(x)-F(a))^2d\m(x)\leq A\int_a^{\infty} \left(\frac{dF}{dS}\right)^2(t)dS(t),\ \forall F\in\mathcal{F}\\
\int_{-\infty}^a (F(x)-F(a))^2d\m(x)\leq A\int_{-\infty}^{a} \left(\frac{dF}{dS}\right)^2(t)dS(t),\ \forall F\in\mathcal{F}
\end{split}
\end{equation}
The constant $A$ will be called a constant to Hardy inequality. Denote by $A^+_a$ (resp. $A^-_a$) the infinum of the constants to Hardy inequality over the upper (resp. the lower) half state space. Notice that, since $F(x)-F(a)=0$ for $x=a$, we can interpret the left side of the inequality \eqref{eq:Hardy} indifferently as
\[\int_{]a,+\infty[} (F(x)-F(a))^2d\m(x)\quad\mbox{or}\quad\int_{[a,+\infty[} (F(x)-F(a))^2d\m(x).\]
Finally recall that for $a\in \R$ 
\begin {equation*}
B^+_a:=\sup_{x\geq a} \m(]x,+\infty[)(S(x)-S(a))
\end{equation*} 
\begin {equation*}
B^-_a:=\sup_{x\leq a} \m(]-\infty;x[)(S(a)-S(x))
\end{equation*}
It is to notice that the quantities $B^\pm_a$ $A^\pm_a$ can be infinite. Nevertheless the following relations hold between these quantities:
 
\begin {theo}\label{Hardy} For any $a\in\R$ we have the inequalities
\begin{eqnarray*}
B_a^+\leq A_a^+\leq 4B_a^+\\
B_a^-\leq A_a^-\leq 4B_a^-
\end{eqnarray*}
\end{theo}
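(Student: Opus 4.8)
The plan is to prove the two-sided bound $B_a^+\le A_a^+\le 4B_a^+$ for the Hardy inequality over $]a,\infty[$; the inequalities for $A_a^-$ follow by the symmetric argument $x\mapsto -x$. Throughout I write $B=B_a^+$ and assume WLOG $S(a)=0$, so that $B=\sup_{x\ge a}m(]x,\infty[)S(x)$ and the Hardy inequality reads $\int_a^\infty (F(x)-F(a))^2\,dm(x)\le A\int_a^\infty g^2\,dS$ with $g=\frac{dF}{dS}$. Since $F(x)-F(a)=\int_a^x g\,dS$, it is equivalent (replacing $F-F(a)$ by a primitive) to establish the one-dimensional Hardy-type inequality
\begin{equation*}
\int_a^\infty\Bigl(\int_a^x g(t)\,dS(t)\Bigr)^2 dm(x)\le A\int_a^\infty g^2(t)\,dS(t),\qquad g\in L^2(dS),
\end{equation*}
and identify the best constant $A_a^+$ with the optimal $A$ here. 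This is exactly the classical Artola--Talenti--Tomaselli / Muckenhoupt criterion adapted to the pair of measures $(dm, dS)$ on $]a,\infty[$, and the proof splits cleanly into the lower bound (necessity) and the upper bound (sufficiency).

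For the lower bound $A_a^+\ge B$: fix $x_0>a$ and test the inequality with the function $g=\1_{]a,x_0]}$, i.e. $F(x)-F(a)=S(x\wedge x_0)-S(a)$, which lies in $\mathcal F$ (it is bounded, hence in $L^2(m)$ since $m(\R)<\infty$, and its $dS$-derivative is $\1_{]a,x_0]}\in L^2(dS)$). The left-hand side is at least $\int_{x_0}^\infty (S(x_0)-S(a))^2\,dm(x)=(S(x_0)-S(a))^2 m(]x_0,\infty[)$, while the right-hand side is $A\,(S(x_0)-S(a))$. Dividing, any admissible $A$ satisfies $A\ge (S(x_0)-S(a))m(]x_0,\infty[)$; taking the supremum over $x_0>a$ and using continuity of $S$ gives $A_a^+\ge B$. (If $B=\infty$ this shows no finite $A$ works, consistent with the convention.)

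For the upper bound $A_a^+\le 4B$: assume $B<\infty$. I would use the standard weighted Cauchy--Schwarz trick with the weight $h(t)=(S(t)-S(a))^{-1/4}$ (or rather $(S(t)-S(a))^{1/4}$ in the right place): write
\begin{equation*}
\int_a^x g(t)\,dS(t)=\int_a^x g(t)(S(t)-S(a))^{1/4}\cdot (S(t)-S(a))^{-1/4}\,dS(t),
\end{equation*}
apply Cauchy--Schwarz to get $\bigl(\int_a^x g\,dS\bigr)^2\le\bigl(\int_a^x g^2(S-S(a))^{1/2}\,dS\bigr)\bigl(\int_a^x (S-S(a))^{-1/2}\,dS\bigr)$, note $\int_a^x (S(t)-S(a))^{-1/2}\,dS(t)=2(S(x)-S(a))^{1/2}$, then integrate in $dm(x)$, use Tonelli to swap the order of integration, and bound the resulting inner integral $\int_t^\infty 2(S(x)-S(a))^{1/2}\,dm(x)$ by a multiple of $(S(t)-S(a))^{-1/2}B$ via the pointwise estimate $m(]x,\infty[)\le B/(S(x)-S(a))$ together with an integration-by-parts / layer-cake computation of $\int_t^\infty (S(x)-S(a))^{1/2}\,dm(x)$. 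Tracking constants through this chain yields the factor $4$; the delicate point is precisely the bookkeeping that the tail integral $\int_t^\infty (S(x)-S(a))^{1/2}\,dm(x)$ is $\le 2B(S(t)-S(a))^{-1/2}$ (one writes $(S(x)-S(a))^{1/2}=(S(t)-S(a))^{1/2}+\int_t^x \tfrac12(S(u)-S(a))^{-1/2}\,dS(u)$ and Fubini-s again). Then one recovers $\int_a^\infty(\int_a^x g\,dS)^2\,dm\le 4B\int_a^\infty g^2(t)(S(t)-S(a))^{1/2}(S(t)-S(a))^{-1/2}\,dS(t)=4B\int_a^\infty g^2\,dS$.

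The main obstacle I anticipate is the upper bound, specifically getting the sharp constant $4$ rather than some larger universal constant: a naive splitting of $]a,\infty[$ into dyadic blocks $\{S-S(a)\in[2^k,2^{k+1})\}$ easily gives sufficiency with a worse constant, so the argument must be the "right" Cauchy--Schwarz with the square-root weight, and one must verify that all the manipulations (Tonelli, the two integration-by-parts steps) are legitimate for general Borel $m$ and general continuous strictly increasing $S$ — in particular without absolute continuity assumptions, handling the possible atoms of $m$ and flat/jump interplay between $dS$ and $dm$ carefully, which is where this generalizes \cite{MalRob}. A secondary point is the reduction to functions of the form $F-F(a)=\int_a^\cdot g\,dS$: one must check that the infimum defining $A_a^+$ over all of $\mathcal F$ coincides with the infimum over this subclass, which holds because only the values of $F$ on $[a,\infty[$ and the restriction of $g$ to $[a,\infty[$ enter both sides, and any such $g\in L^2(dS\!\restriction_{]a,\infty[})$ arises from some $F\in\mathcal F$ after truncation.
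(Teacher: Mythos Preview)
Your proposal is correct and follows essentially the same route as the paper: the lower bound uses the same test function $F(x)=S(x\wedge x_0)-S(x\wedge a)$, and the upper bound uses the identical Cauchy--Schwarz splitting with weight $(S(t)-S(a))^{1/4}$ against $(S(t)-S(a))^{-1/4}$, followed by Fubini and the same integration-by-parts estimate $\int_{]t,\infty[}\sqrt{S(x)-S(a)}\,dm(x)\le 2B/\sqrt{S(t)-S(a)}$ deduced from $m(]x,\infty[)\le B/(S(x)-S(a))$. The only cosmetic difference is that the paper performs the tail estimate via a direct Stieltjes integration by parts rather than rewriting $\sqrt{S(x)-S(a)}$ as an integral and Fubini-ing a second time, but the two are equivalent and yield the same constant~$4$.
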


\begin {proof}
Let $a$ be fixed. We shall only prove the inequalities concerning $A^+_a$. Those concerning $A^-_a$ can be obtained by symmetry. Denote, for simplicity, $B_a^+=B$ and $ A_a^+=A$.

Firstly suppose that $B<\infty$. Using the Cauchy-Schwartz inequality we can write, for $x>a$,
\begin{multline*}
\left (F(x)-F(a)\right)^2 \leq\int_a^x \left (\frac {dF}{dS}\right )^2(t)\sqrt{S(t)-S(a)}dS(t)
\times \int_a^x \frac{dS(t)}{\sqrt{S(t)-S(a)}}\\
=\int_a^x \left (\frac {dF}{dS}\right )^2(t)\sqrt{S(t)-S(a)}dS(t)
\times 2\sqrt{S(x)-S(a)}
\end {multline*}

Applying the Fubini theorem we get
\begin{multline}\label{Fub}
\int_{a}^\infty\left (F(x)-F(a)\right)^2 dm(x)\leq\\
2\int_a^{\infty} \left (\frac {dF}{dS}\right)^2(t)\sqrt{S(t)-S(a)} \left(\int_{]t,+\infty[}{\sqrt{S(x)-S(a)}}dm(x)\right)dS(t)
\end{multline}
Put $M(t)=\m(]-\infty;t])$ and $M=\m(\R)$. The definition of $B$ yields
\begin{equation*}
m(]x,+\infty[)=M-M(x)\leq \frac{B}{S(x)-S(a)}
\end{equation*}
whence
\begin{equation*}
(M-M(x))\sqrt{S(x)-S(a)}\leq \frac{B}{\sqrt{S(x)-S(a)}}
\end{equation*}
Observe that $\lim_{x\to+\infty}(M-M(x))\sqrt{S(x)-S(0)}=0$. The integration by parts formula then yields
\begin{multline*}
\int_{]t;\infty[}{\sqrt{S(x)-S(a)}}dm(x)=-\int_{]t;\infty[} \sqrt{S(x)-S(a)}d(M-M(x))\\
=(M-M(t))\sqrt{S(t)-S(a)}+\int_t^{+\infty}(M-M(x))d\sqrt{S(x)-S(a)}\\
\leq\frac{B}{\sqrt{S(t)-S(a)}}+\frac{B}2\int_t^{+\infty}\frac{dS(x)}{(S(x)-S(a))^{3/2}}=\frac{2B}{\sqrt{S(t)-S(a)}},
\end{multline*}
which, together with~\eqref{Fub} implies
\begin{equation*}
\int_{]0;+\infty[}\left (F(x)-F(0)\right)^2 dm(x)\leq 4B\int_0^{+\infty}\left( \frac {dF}{dS}\right)^2 (t)dS(t).
\end{equation*}
Hence the Hardy inequality~\eqref{eq:Hardy} holds with the constant $4B$, which implies $A\leq 4B$.

Next, suppose $A<\infty$. Take $r>a$ and put $F(x)=S(x\wedge r)-S(x\wedge a)$. $F(x)$ is an element of $\mathcal{F}$, so we can write the Hardy inequality~\eqref{eq:Hardy} for such a $F(x)$:
\begin {multline*}
\left(S(r)-S(a)\right)^2m(]r;+\infty[)\leq \int_{]a,+\infty[}\left(F(x)-F(a)\right)^2 dm(x)\\
\leq A\int_a^{+\infty}\left(\frac {dF}{dS}\right)^2(t) dS(t)=A\int_a^{r}dS(t)=A\left (S(r)-S(a)\right),
\end{multline*}
whence
\[\left(S(r)-S(a)\right)m(]r;+\infty[)\leq A\]
for any $r>a$. This implies $B\leq A$. The inequalities concerning $A^+_a$ follow from these facts.
\end{proof}

\subsection {Poincar\'e inequality}

Let $c\leq \infty$ be a constant. We call the following inequality Poincar\'e inequality associated with the function space $\mathcal{F}$, with the constant $c$:
\begin {equation}\label {eq:Poincare}
\int_{-\infty}^{+\infty}\left (F(x)-\overline{m}(F)\right )^2 dm(x)\leq c\int_{-\infty}^{+\infty}\left (\frac {dF}{dS}\right )^2(x)dS(x), \forall F\in\mathcal{F}
\end{equation}
where $\overline{m}(F)=\frac{1}{m(\R)}\int F(x) dm(x)$. The constant $c$ will be called a constant to Poincar\'e inequality. Denote by $c_P$ the lower bound of the constants to Poincar\'e inequality. It is to notice that $c_P$ itself is a constant to Poincar\'e inequality.

\begin {theo}\label {Poincare}
The following relations hold: 
$$\sup_a(A^+_a\wedge A^-_a) \leq c_P\leq \inf_a(A^+_a\vee A^-_a) .$$
Moreover, $c_P<\infty$, if and only if the constants $A^+_a$, $A^-_a$ (or equivalently $B^+_a$, $B^-_a$) are all finite for some $a\in\R$.
\end {theo}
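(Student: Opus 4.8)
The plan is to establish the two inequalities $\sup_a(A^+_a\wedge A^-_a)\leq c_P$ and $c_P\leq\inf_a(A^+_a\vee A^-_a)$ separately, and then read off the finiteness claim from Theorem~\ref{Hardy} together with the "all-or-none" observation that $B^\pm_a$ are finite for one $a$ iff for all $a$.

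For the upper bound $c_P\leq\inf_a(A^+_a\vee A^-_a)$, I would fix $a\in\R$ and try to split the Poincaré variance into an upper and a lower half-line contribution. The natural move is to use the variational characterization $\int(F-\overline m(F))^2\,dm=\inf_{c\in\R}\int(F-c)^2\,dm$, which lets me replace the mean by the convenient constant $F(a)$:
\[
\int_{-\infty}^{+\infty}\!\!\big(F(x)-\overline m(F)\big)^2dm(x)\leq\int_{-\infty}^{+\infty}\!\!\big(F(x)-F(a)\big)^2dm(x)
=\int_a^\infty\!\!+\int_{-\infty}^a.
\]
Now apply the Hardy inequality~\eqref{eq:Hardy} on each half with constants $A^+_a$ and $A^-_a$ (or anything slightly larger, then pass to the infimum): the right-hand side is bounded by $A^+_a\int_a^\infty(\frac{dF}{dS})^2dS+A^-_a\int_{-\infty}^a(\frac{dF}{dS})^2dS\leq (A^+_a\vee A^-_a)\int_{-\infty}^{+\infty}(\frac{dF}{dS})^2dS$. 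Since this holds for every $F\in\mathcal F$, the constant $A^+_a\vee A^-_a$ is a constant to the Poincaré inequality, hence $c_P\leq A^+_a\vee A^-_a$; taking the infimum over $a$ gives the claim. (One should check $F(a)$ is finite for $F\in\mathcal F$, which follows since $dF\ll dS$.)

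For the lower bound $\sup_a(A^+_a\wedge A^-_a)\leq c_P$, I would argue that a Poincaré constant forces a Hardy-type bound on at least one half-line. Fix $a$ and take a function $F\in\mathcal F$ supported, in the sense $\frac{dF}{dS}=0$, off the upper half $]a,\infty[$ — concretely $F(x)=G(x\vee a)-G(a)$ for $G\in\mathcal F$, so $F\equiv 0$ on $]-\infty,a]$ and $F(a)=0$. Plugging such $F$ into~\eqref{eq:Poincare}: the Dirichlet side is $\int_a^\infty(\frac{dF}{dS})^2dS$, while on the left $\int(F-\overline m(F))^2dm\geq \int_a^\infty(F-\overline m(F))^2 dm$, and one would like to compare this to $\int_a^\infty F^2dm$. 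This is where the genuine work lies: a naive bound is not immediate because $\overline m(F)\neq 0$ in general. The remedy is to test with $F$ replaced by $F-$(suitable constant) on the *other* side too, or to observe that, testing half-line functions vanishing at $a$ on *each* side, the Poincaré inequality yields the Hardy inequality on each side with the same constant $c_P$ up to handling the mean; combining the two sides and keeping only the smaller of the two Hardy constants $A^+_a\wedge A^-_a$ produces the bound $A^+_a\wedge A^-_a\leq c_P$. I expect this step — correctly disposing of the mean term $\overline m(F)$ when passing from Poincaré on $\R$ to Hardy on a half-line — to be the main obstacle; it is resolved by a careful choice of test functions on both half-lines simultaneously and using that only one of the two Hardy constants is retained via the $\wedge$.

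Finally, the equivalence "$c_P<\infty$ iff $A^\pm_a$ finite for some $a$": if $c_P<\infty$ then by the just-proved lower bound $A^+_a\wedge A^-_a\leq c_P<\infty$ for every $a$, so at least one of $A^+_a,A^-_a$ is finite for each $a$ — but in fact one wants both finite; here invoke Theorem~\ref{Hardy} ($B^\pm_a\leq A^\pm_a\leq 4B^\pm_a$) and the remark that $B^+_a<\infty$ for one $a$ iff for all $a$ (and likewise $B^-_a$), which upgrades "one of them finite for all $a$" to "both finite for all $a$". Conversely, if $A^+_a,A^-_a<\infty$ for some $a$, the upper bound gives $c_P\leq A^+_a\vee A^-_a<\infty$. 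This closes the proof.
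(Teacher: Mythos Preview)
Your upper bound $c_P\leq\inf_a(A^+_a\vee A^-_a)$ is correct and is exactly the paper's argument.

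For the lower bound $\sup_a(A^+_a\wedge A^-_a)\leq c_P$, you have the right instinct but stop short of the actual construction. The paper does precisely what you hint at: pick near-extremal test functions $F_+$, $F_-$ for the Hardy inequalities on $]a,\infty[$ and $]-\infty,a[$ respectively, and set
\[
F(x)=\beta\bigl(F_-(x)-F_-(a)\bigr)\mathbb{I}_{\{x<a\}}+\alpha\bigl(F_+(x)-F_+(a)\bigr)\mathbb{I}_{\{x>a\}},
\]
choosing $\alpha,\beta$ (not both zero) so that $\overline m(F)=0$. Then the Poincar\'e inequality applied to this $F$ gives $c_P\geq A^+\wedge A^-$ directly, with no mean term to dispose of. Your description ``careful choice of test functions on both half-lines simultaneously'' is correct in spirit; it just needs to be said explicitly that the combination kills the mean.

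Your argument for the finiteness equivalence has a genuine gap. From the lower bound you get $A^+_a\wedge A^-_a\leq c_P<\infty$ for every $a$, i.e.\ for each $a$ at least \emph{one} of $A^+_a,A^-_a$ is finite. The all-or-none principle (``$B^+_a<\infty$ for one $a$ iff for all $a$'', same for $B^-$) does \emph{not} upgrade this to ``both finite'': it is perfectly consistent with these two facts that $A^+_a<\infty$ for all $a$ while $A^-_a=\infty$ for all $a$. The paper therefore argues differently. For $F\in\mathcal F$ it applies Poincar\'e to $G(x)=F(x)-F(x\wedge a)$, bounds $m(\R)\overline m(G)^2\leq \frac{m(]a,\infty[)}{m(\R)}\int_a^\infty(F-F(a))^2\,dm$ by Cauchy--Schwarz, and rearranges to obtain the Hardy inequality on $]a,\infty[$ with constant $c_P\big/\bigl(1-\tfrac{m(]a,\infty[)}{m(\R)}\bigr)$. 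This proves $A^+_a<\infty$ directly (and $A^-_a<\infty$ by symmetry). If you want to salvage your own route, you would need to add the observation that $\sup_a B^+_a=\infty$ always (since e.g.\ $B^+_a\geq m(]0,\infty[)(S(0)-S(a))\to\infty$ as $a\to-\infty$), so the asymmetric scenario above would force $\sup_a(A^+_a\wedge A^-_a)=\sup_a A^+_a=\infty$, contradicting $c_P<\infty$. But as written, your logical step ``one finite for all $a$'' $\Rightarrow$ ``both finite for all $a$'' is invalid.
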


\begin {proof}
The variational formula for the variance and the Hardy inequality give, for all $F\in\mathcal{F}$ and $a\in \R$,
\begin {multline*}
\int_{-\infty}^{+\infty}\left (F(x)-\overline{m}(F)\right )^2 dm(x)\leq \int_{-\infty}^{+\infty}\left (F(x)-F(a)\right )^2 dm(x)=\\
= \int_{-\infty}^{a}\left (F(x)-F(a)\right )^2 dm(x)+\int_{a}^{+\infty}\left (F(x)-F(a)\right )^2 dm(x)\\
\leq A^-_a \int_{-\infty}^a\left (\frac {dF}{dS}\right)^2(t)dS(t)+A^+_a \int_a^{+\infty}\left (\frac {dF}{dS}\right)^2(t)dS(t)\\
\leq  (A^-_a\vee A^+_a) \int_{-\infty}^\infty \left (\frac {dF}{dS}\right)^2(t)dS(t)
\end {multline*} 
which is just the Poincar\'e inequality with the constant $A^-_a\vee A^+_a$. This being true for all $a$, we get $c_P\leq \inf_a(A^-_a\vee A^+_a)$.

Let us show that $c_P\geq \sup_a(A^+_{a}\wedge A^-_{a})$. Fix some $a\in\R$. For any $A^+<A_a^+$ and $A^-<A_a^-$ there exist some $F_+\in\F$ and $F_-\in\F$  such that 
\begin {equation*}
\int_{a}^{\infty}\left (F_+(x)-F_+(a)\right)^2\,d\m(x)\geq A^+\int_{a}^{\infty}\left (\frac {dF_+}{dS}\right)^2(t)dS(t)>0
\end{equation*}
\begin {equation*}
\int_{a}^{\infty}\left (F_-(x)-F_-(a)\right)^2\,d\m(x)\geq A^-\int_{-\infty}^a\left (\frac {dF_-}{dS}\right)^2(t)dS(t)>0
\end{equation*}
Choose $\alpha\in\R$, $\beta\in\R$ and set 
$$
F=\beta (F_-(x)-F_-(a))\mathbb{I}_{\{x<a\}}+\alpha (F_+(x)-F_+(a))\mathbb{I}_{\{x>a\}}
$$
in such a way that $\alpha^2+\beta^2\neq 0$ and $\overline{m}(F)=0$. Remark that $$
\frac {dF}{dS}|_{]-\infty,a[}=\beta \frac{dF_-}{dS},\quad
\frac {dF}{dS}|_{]a,\infty[}=\alpha \frac{dF_+}{dS}
$$
Then
\begin{multline*}
c_P\int_{-\infty}^{\infty}\left (\frac {dF}{dS}\right)^2(t)dS(t)
\geq \int_{-\infty}^{+\infty} F^2(x)dm(x)\\
=\beta^2\int_{-\infty}^{a} (F_-(x)-F_-(a))^2(x)dm(x)+\alpha^2\int_{a}^{+\infty} (F_-(x)-F_-(a))^2(x)dm(x) \geq\\
\geq \beta^2 A^-\int_{-\infty}^a\left (\frac {dF_-}{dS}\right)^2(t)dS(t)
+\alpha^2 A^+\int_{a}^{\infty}\left (\frac {dF_+}{dS}\right)^2(t)dS(t)=\\
=A^-\int_{-\infty}^a\left (\frac {dF}{dS}\right)^2(t)dS(t)
+A^+\int_{a}^{\infty}\left (\frac {dF}{dS}\right)^2(t)dS(t)\geq\\
\geq A^+\wedge A^-\int_{-\infty}^{\infty}\left (\frac {dF}{dS}\right)^2(t)dS(t)
\end{multline*}
Since $\int_{-\infty}^{\infty}\left (\frac {dF}{dS}\right)^2(t)dS(t)>0$ and $A^+<A_a^+$, $A^-<A_a^-$ are  arbitrary, it follows that $c_P\geq(A^+_{a}\wedge A^-_{a})$ for any $a$, whence $c_P\geq\sup_a(A^+_{a}\wedge A^-_{a})$. The bounds on $c_P$ are proved.

Consider the second assertion of the theorem. If for some $a$, $A^+_a$ and $A^-_a$ are finite, the bounds on $c_P$ imply $c_P<\infty$. Suppose now that $c_P<\infty$. For any $a\in\R$ and $F\in\mathcal{F}$, put $G(x)=F(x)-F(x\wedge a)$. Notice that
\begin{eqnarray*}
\overline{m}(G)^2&\leq& \frac{1}{m(\R)^2}\left(\int_a^\infty(F(x)-F(a))dm(x)\right)^2\\
&\leq&\frac{m(]a,\infty[)}{m(\R)^2}\int_a^\infty(F(x)-F(a))^2dm(x)
\end{eqnarray*}
Using Poincar\'e inequality, we can write
\begin{eqnarray*}
&&\int_a^\infty(F(x)-F(a))^2dm(x)\\
&=&\int_{-\infty}^\infty G^2(x) dm(x)\\
&=&\int_{-\infty}^\infty (G-\overline{m}(G))^2(x) dm(x)+\int_{-\infty}^\infty \overline{m}(G)^2 dm(x)\\
&\leq &c_P \int_{-\infty}^\infty \left(\frac{dG}{dS}\right)^2(x) dS(x)+ m(\R)\overline{m}(G)^2\\
&\leq &c_P \int_{a}^\infty \left(\frac{dF}{dS}\right)^2(x) dS(x)+ \frac{m(]a,\infty[)}{m(\R)}\int_a^\infty(F(x)-F(a))^2dm(x)
\end{eqnarray*}
Shifting the last term to the left, the above computation becomes
$$
\int_a^\infty(F(x)-F(a))^2dm(x)\leq \frac{c_P}{1-\frac{m(]a,\infty[)}{m(\R)}}\int_{a}^\infty \left(\frac{dF}{dS}\right)^2(x) dS(x)
$$which is just the Hardy's inequality over the upper half space $]a,\infty[$ with a finite constant. We conclude that $A^+_a<\infty$. The fact $A^-_a<\infty$ can be seen in the same way. The theorem is proved.
\end {proof}

\section {Spectral gap}\label{sec:gap}

In this section we relate Hardy and Poincar\'e inequalities to spectral gaps of the generators of Dirichlet forms associated with $X$. We suppose anew that $S$ and $m$ are the scale function and the speed measure of $X$.

\begin{theo}\label {D-form of X} The diffusion $X$ is $m$-symmetric. The Dirichlet space associated with $X$ is the function space $\mathcal{F}$ given by~\eqref{defF}, and the Dirichlet form has the expression 
\[
\mathcal{E}(F,F)=\int_{-\infty}^\infty \left(\frac{dF}{dS}\right)^2(x)dS(x), \ F\in\mathcal{F}.
\]
\end{theo}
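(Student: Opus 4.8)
## Proof proposal for Theorem~\ref{D-form of X}

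The plan is to identify the Dirichlet form by a direct computation, exploiting the one-dimensional structure. First I would recall from the general theory of one-dimensional diffusions (cf.\ \cite[ch.~VII]{RY}) that a regular linear continuous conservative Markov process is, up to the time change encoded in the speed measure $m$, a process on natural scale; concretely, the infinitesimal generator acts as $L = \frac{d}{dm}\frac{d}{dS}$ on a suitable domain, and $m$ is a reversing (symmetric) measure for $X$. This gives the $m$-symmetry immediately: for smooth compactly supported $F,H$ one integrates by parts twice against $dm$ and $dS$ to obtain $\int (LF) H\, dm = -\int \frac{dF}{dS}\frac{dH}{dS}\, dS = \int F (LH)\, dm$, with no boundary terms because $\lim_{x\to\pm\infty}S(x)=\pm\infty$ pushes the endpoints to scale-infinity. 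The bilinear form read off from this computation is $\mathcal{E}(F,H) = \int \frac{dF}{dS}\frac{dH}{dS}\, dS$, so $\mathcal{E}(F,F) = \int \left(\frac{dF}{dS}\right)^2 dS$ as claimed, and $\mathcal{F}$ as defined in~\eqref{defF} is exactly the set of $F\in L^2(m)$ on which this expression is finite.

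The substantive point is that $(\mathcal{E},\mathcal{F})$ is \emph{the} Dirichlet form of $X$, i.e.\ that it is closed, Markovian, and that its associated semigroup coincides with the transition semigroup of $X$. I would argue as follows. Closedness: if $F_n\to F$ in $L^2(m)$ and $\frac{dF_n}{dS}$ is $\mathcal{E}$-Cauchy, then $\frac{dF_n}{dS}\to g$ in $L^2(dS)$; passing to the limit in $F_n(b)-F_n(a) = \int_a^b \frac{dF_n}{dS}\, dS$ along a subsequence where $F_n\to F$ $m$-a.e.\ (hence on a dense set, since $m$ charges every interval) shows $dF\ll dS$ with $\frac{dF}{dS}=g$, so $F\in\mathcal{F}$ and $\mathcal{E}(F_n-F,F_n-F)\to 0$. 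The Markovian (unit-contraction) property is clear since $\frac{d(\phi\circ F)}{dS} = (\phi'\circ F)\frac{dF}{dS}$ for normal contractions $\phi$, which only shrinks $\int(\frac{dF}{dS})^2 dS$. Regularity: the space $\mathcal{F}\cap C_0(\R)$ contains all $C^1$ compactly supported functions and is dense both in $C_0(\R)$ (uniform norm) and in $\mathcal{F}$ (form norm), using again that $S$ is continuous strictly increasing with full range.

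It then remains to match semigroups. Here I would use uniqueness: a regular Dirichlet form determines a unique $m$-symmetric Hunt process up to equivalence, so it suffices to check that the Hunt process $\tilde X$ associated with $(\mathcal{E},\mathcal{F})$ has the same scale function and speed measure as $X$, or equivalently the same generator on a core. By the integration-by-parts identity above, for $F$ in a $C^2$-core the $\mathcal{E}$-generator is $\frac{d}{dm}\frac{d}{dS}$, which is precisely the classical generator of $X$; since both processes are regular diffusions on $\R$ with this generator, they have the same resolvent and hence the same law. Alternatively, and perhaps more cleanly, one can directly verify that the transition semigroup $P_t$ of $X$ is $m$-symmetric (this is the classical self-adjointness of one-dimensional diffusion semigroups in $L^2(m)$, provable from the explicit Green kernel $G(a,b,x,y)$ displayed in Section~\ref{sec:expmom}, which is manifestly symmetric in $(x,y)$ — letting $a\to-\infty$, $b\to+\infty$ gives symmetry of the resolvent of the $\R$-process) and that the associated Dirichlet form, computed via $\lim_{t\downarrow 0}\frac1t\langle F - P_tF, F\rangle_{L^2(m)}$, equals $\mathcal{E}(F,F)$ on $\mathcal{F}$; monotone convergence of the difference quotients identifies the limit with $\int(\frac{dF}{dS})^2 dS$ for $F$ in the core and then for all $F\in\mathcal{F}$ by the closedness just established.

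The main obstacle I anticipate is the semigroup-matching step: proving that the abstractly constructed Dirichlet-form process really is $X$ (not merely a process with the same generator on smooth functions) requires either invoking the uniqueness part of Fukushima's correspondence together with a core argument, or else carrying out the direct symmetry-and-difference-quotient computation with enough care about domains and boundary behavior at $\pm\infty$. The potential-theoretic bookkeeping — confirming that no mass escapes to the boundary, so that $X$ is conservative as a Dirichlet-form process and the form has no killing part — is routine given $m(\R)<\infty$ and $S(\pm\infty)=\pm\infty$, but must be stated explicitly.
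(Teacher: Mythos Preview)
Your outline is correct, and the ingredients you list (the generator $\frac{d}{dm}\frac{d}{dS}$, integration by parts against $dS$ and $dm$, no boundary terms because $S(\pm\infty)=\pm\infty$, closedness of $(\mathcal{E},\mathcal{F})$) are exactly those the paper uses. The difference is in the semigroup-matching step. The paper does not invoke Fukushima's uniqueness for regular Dirichlet forms, nor does it compute the difference quotient $\frac{1}{t}\langle F-P_tF,F\rangle$. Instead it identifies the resolvents directly: for $f$ bounded with compact support it shows, via the same integration-by-parts computation you propose (with the boundary conditions $\frac{d^\pm R_\lambda f}{dS}(\pm\infty)=0$ justified by the non-exit nature of $\pm\infty$), that $R_\lambda f\in\mathcal{F}$ and that $\mathcal{E}_\lambda(h,R_\lambda f)=\int hf\,dm$ for every $h\in\mathcal{F}$ (first for truncations $\varphi(h)$, then passing to the limit). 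This is precisely the variational characterization of the $\mathcal{E}$-resolvent $U_\lambda f$, so $R_\lambda=U_\lambda$ as bounded operators on $L^2(m)$. This route sidesteps the ``potential-theoretic bookkeeping'' you flag: there is no need to construct $\tilde X$ and compare it to $X$, or to control the difference quotient, and the conservativeness/no-killing issues never arise explicitly. Your approach via Fukushima's correspondence is more conceptual and would also work, but it requires you to actually establish regularity of $(\mathcal{E},\mathcal{F})$ and then argue carefully that the associated Hunt process coincides with $X$ (same generator on a core does not by itself give same process without an essential-self-adjointness or Markov-uniqueness argument); the paper's resolvent identity is the cleanest way to close that loop.
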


\begin{proof} The form $(\mathcal{E},\mathcal{F})$ is a Dirichlet form as we can check in the way of, for example, Fukushima et al~\cite[p. 6]{FOT}, Example 1.2.2. It remains to show the association of this Dirichlet form to the diffusion $X$.

Let $\hat{\mathcal{E}}$ be the Dirichlet form associated with $X$ with its generator $A$ and its resolvent $(G_\alpha,\alpha>0)$. Let $\lambda>0$ and 
\[
R_\lambda f(x)=\mathbb{E}_x\int_0^\infty e^{-\lambda t}f(X_t)dt
\]
for a bounded function, then $R_\lambda f$ is a version of $G_\lambda f$. Following Ito-McKean~\cite[p. 98]{IMK}, we introduce the diffusion generator 
$$
\dcb
\mathcal{B}=\{f: \mbox{\small $f$ is bounded with compact support}\}\\
\Theta =\lambda - (R_\lambda)^{-1} \quad\mbox{\small on $\mathcal{B}$}
\dce
$$
It is known (see~\cite[p. 117]{IMK}) that $\Theta$ is the differential operator $\Theta = \frac{d}{dm}\frac{d}{dS}$, and we can relate $\Theta$ to $A$ in the following way (with equalities in the sense of $\mathbb{L}^2(m)$)
$$
AR_\lambda f = \lambda R_\lambda f - f = \Theta R_\lambda f = \frac{d}{dm}\frac{d^{\pm} R_\lambda f}{dS}, \ f\in\mathcal{B}
$$

We will need an integration by parts formula which depends on the boundary conditions 
\begin{equation}
\frac{d^\pm R_\lambda f}{dS}(\infty)= \frac{d^\pm R_\lambda f}{dS}(-\infty)=0, \ f\in\mathcal{B}
\label{eq:dRdS}
\end{equation}

Indeed, since $S(\pm\infty)=\pm\infty$, the boundaries $\pm\infty$ are non-exit boundaries: 
$$
\int_0^\infty m((0,x))dS(x)=\infty, \int_{-\infty}^0 m((x,0))dS(x)=\infty
$$
(see Ito-Mckean~\cite[p. 130]{IMK}). In this case, for $-\infty<a<\infty$ and $g(x)=\mathbb{E}_x[e^{-\lambda T_a}]$, 
$$
\lim_{x\rightarrow-\infty} \frac{d^\pm g}{dS}(x)=0,\ \lim_{x\rightarrow\infty} \frac{d^\pm g}{dS}(x)=0
$$
But if $[a,b]$ is a compact support of a $f\in\mathcal{B}$ and $\tau$ is the hitting time of $[a,b]$, we can write 
$$
R_\lambda f(x)=\mathbb{E}_x[e^{-\lambda \tau}R_\lambda f(X_\tau)]=
\left\{
\dcb
\mathbb{E}_x[e^{-\lambda T_a}]R_\lambda f(a),&&x<a\\
\mathbb{E}_x[e^{-\lambda T_b}]R_\lambda f(b),&&x>b\\
\dce
\right.
$$
which yields the boundary conditions~\eqref{eq:dRdS}.

We can now write, for $g=R_\lambda f, f\in\mathcal{B}$,
\begin{eqnarray*}
\infty &>&\hat{\mathcal{E}}(g,g)\\
&=&-\int_{-\infty}^\infty AR_\lambda f(x) R_\lambda f(x) m(dx)\\
&=&-\int_{-\infty}^\infty  R_\lambda f(x) d\frac{d^+R_\lambda f}{dS}(x) \\
&=&-\left.R_\lambda f(x) \frac{d^+R_\lambda f}{dS}(x)\right|_{-\infty}^\infty +\int_{-\infty}^\infty  \frac{d^+R_\lambda f}{dS}(x)d R_\lambda f(x) \\
&=&\int_{-\infty}^\infty  (\frac{d^+R_\lambda f}{dS}(x) )^2d S(x),
\end{eqnarray*}
which means that $g=R_\lambda f\in\mathcal{F}$ and $\hat{\mathcal{E}}(g,g)=\mathcal{E}(g,g)$

Let $h\in \mathcal{F}$. We take for each $N>1$ a function $\varphi\in C^\infty(\mathbb{R})$ such that 
$$
\dcb
\varphi(x)=-(N+1),&&x<-(N+2)\\
\varphi(x)=x,&&-N<x<N\\
\varphi(x)=(N+1),&&x>(N+2)\\
0\leq \varphi'(x)\leq 1,&&\forall x
\dce
$$
The function $\varphi(h)$ is again an element of $\mathcal{F}$. For any $f\in\mathcal{B}$ we compute
\begin{eqnarray*}
\mathcal{E}_\lambda(\varphi(h),R_\lambda f)&=&\int_{-\infty}^\infty \frac{d\varphi(h)}{dS} \frac{d^+R_\lambda f}{dS} dS + \lambda \int_{-\infty}^\infty \varphi(h) R_\lambda f dm\\
&=&\varphi(h) \frac{d^+R_\lambda f}{dS}|_{-\infty}^\infty-\int_{-\infty}^\infty \varphi(h) d\frac{d^+R_\lambda f}{dS} + \lambda\int_{-\infty}^\infty \varphi(h) R_\lambda f dm\\
&=&-\int_{-\infty}^\infty \varphi(h) AR_\lambda f dm + \lambda\int_{-\infty}^\infty \varphi(h) R_\lambda f dm\\
&=&\int_{-\infty}^\infty \varphi(h)  f dm
\end{eqnarray*}
Let $N\rightarrow\infty$. The conditions on the function $ \varphi$ ensure the convergence of the above quantities. With the resolvent $U_\lambda$ of $\mathcal{E}$, we can then write
$$
\mathcal{E}_\lambda(h,R_\lambda f) = \int_{-\infty}^\infty h  f dm = \mathcal{E}_\lambda(h,U_\lambda f) 
$$
which means $U_\lambda f = R_\lambda f$ for all function $f$ bounded with compact support. As bounded operators on $L^2(m)$, $U_\lambda$ and $R_\lambda$ have to be equal. The theorem is proved. \end{proof}

Recall now the usual properties of spectral gaps. Suppose that $\mathcal{E}$ is a Dirichlet form associated with a non negative self-adjoint operator $-A$ on $\mathbb{L}^2(m)$. Let $(E_\xi,\xi\geq 0)$ be the spectral family associated with $-A$. Denote by $H_\xi$ the image space of $E_\xi$ (which is a projection operator). The elements of $H_0$ are those who satisfy $P_t u=u$ for all $t>0$.

Let $(\cdot,\cdot)$ and $\|\cdot\|$ denote respectively the scalar product and the norm in $\mathbb{L}^2(m)$. We know that $-A$ has a spectral gap at $0$ of width at least $\gamma>0$ if and only if the following inequality
\begin{equation}\label{eq:gap}
\gamma\|f-E_0f\|^2=\gamma\int_{]0,\infty[}d(E_\xi f,f)\leq \int_{]0,\infty[}\xi d(E_\xi f,f)= \mathcal{E}(f,f)
\end{equation}
holds for all $f$ in the domain of $\mathcal{E}$.

It is easy to see now that the Poincar\'e inequality~\eqref{eq:Poincare} with constant $c_P$ can be written as~\eqref{eq:gap}.

\begin{theo} The generator of the Dirichlet form associated with $X$ has a spectral gap $\gamma>0$ if and only if $c_P<\infty$. In this case, $\gamma={1}/{c_P}$.
\end{theo}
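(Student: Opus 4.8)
The plan is to recognize that, once the Dirichlet form of $X$ is known explicitly, the abstract spectral-gap inequality \eqref{eq:gap} is \emph{identical} to the Poincar\'e inequality \eqref{eq:Poincare}, after which the theorem reduces to matching optimal constants. By Theorem~\ref{D-form of X} the Dirichlet form of $X$ is $(\mathcal{E},\mathcal{F})$ with $\mathcal{E}(F,F)=\int_{-\infty}^\infty(\frac{dF}{dS})^2\,dS$ and its domain is $\mathcal{F}$, so I may work with this concrete object throughout.

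First I would identify the space $H_0$. If $u\in H_0$, then $(E_\xi u,u)$ is carried by $\{0\}$, so $\mathcal{E}(u,u)=\int_{]0,\infty[}\xi\,d(E_\xi u,u)=0$ and $u$ lies in the domain $\mathcal{F}$ of $\mathcal{E}$; since $\mathcal{E}(u,u)=\int(\frac{du}{dS})^2\,dS=0$, we get $\frac{du}{dS}=0$ in $L^2(dS)$, whence $u(b)-u(a)=\int_a^b\frac{du}{dS}\,dS=0$ for all $a<b$, i.e.\ $u$ is constant. Because $m(\R)<\infty$, the constants do lie in $L^2(m)$, so $H_0$ is exactly the one-dimensional space of constant functions and $E_0$ is the orthogonal projection of $L^2(m)$ onto it, namely $E_0f=\frac{(f,1)}{(1,1)}=\overline m(f)$. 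Hence $\|f-E_0f\|^2=\int_{-\infty}^\infty(F-\overline m(F))^2\,dm$, and \eqref{eq:gap}, read for all $f$ in the domain $\mathcal{F}$, is precisely the Poincar\'e inequality \eqref{eq:Poincare} with constant $c=1/\gamma$.

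The rest is constant-chasing. If $-A$ has a spectral gap at $0$ of width at least $\gamma>0$, then \eqref{eq:gap} holds, so \eqref{eq:Poincare} holds with constant $1/\gamma$, giving $c_P\leq 1/\gamma<\infty$. Conversely, if $c_P<\infty$, then \eqref{eq:Poincare} holds with constant $c_P$, hence \eqref{eq:gap} holds with $\gamma=1/c_P$, which by the characterization recalled above \eqref{eq:gap} means $-A$ has a spectral gap of width at least $1/c_P>0$. Finally, letting $\gamma$ denote the exact width of the gap (the infimum of the positive spectrum of $-A$), $\gamma$ is the largest constant making \eqref{eq:gap} valid and $c_P$ is the smallest constant making \eqref{eq:Poincare} valid; since the two families of inequalities correspond under $c\leftrightarrow 1/\gamma$, their optimal constants satisfy $\gamma=1/c_P$.

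I do not anticipate a real difficulty: the single point needing a line of argument is the identification $H_0=\{\text{constants}\}$, which I would obtain from $\mathcal{E}(u,u)=0$ as above (rather than from ergodicity of $X$), everything else being the standard dictionary between a Poincar\'e inequality and a spectral gap, already set up in \eqref{eq:gap}.
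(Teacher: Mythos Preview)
Your proof is correct and follows the paper's line exactly: identify $H_0$ with the constants so that $E_0=\overline m$, then read \eqref{eq:gap} as \eqref{eq:Poincare} and match optimal constants. The one variation you flag is real: the paper obtains $H_0\subset\{\text{constants}\}$ from the $L^1$-ergodicity of $X$, whereas you get it from the explicit form of $\mathcal{E}$ in Theorem~\ref{D-form of X} via $\mathcal{E}(u,u)=0\Rightarrow \frac{du}{dS}=0$; your route is slightly more self-contained since it avoids citing an external ergodic theorem. One small omission: to conclude that $H_0$ \emph{equals} the constants (not merely is contained in them) you still need $1\in H_0$, which follows from conservativeness ($P_t 1=1$); the paper states this explicitly, and you should too, since otherwise $E_0=\overline m$ is not justified.
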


\begin{proof} 
By the $L^1$-ergodicity of the process $X$ (see e.g.~\cite{BoSa}), the space $H_0$ can contain only constants. As $X$ is a conservative process, $P_t c=c$ for all $t>0$, whence $H_0=\R$. Notice that $\overline{m}$ is the orthogonal projection operator upon $H_0$, i.e. $E_0=\overline{m}$. The equivalence
$$
\gamma\|F-E_0 F\|^2\leq\mathcal{E}(F,F)\quad\iff\quad \gamma\|F-\overline{m}(f)\|^2
\leq \int_{-\infty}^\infty \left(\frac{dF}{dS}\right)^2 dS
$$
proves the theorem.
%
\end {proof} 


Now we address the Hardy inequalities. Let $a\in \R$. Following Fukushima et al~\cite[p. 142]{FOT}, we introduce the space
$$
\mathcal{F}_{]a,\infty[}=\{f\in\mathcal{F}: f(x)=0, \ x\leq a\}
$$
Then, the restriction of the Dirichlet form $\mathcal{E}$ on $\mathcal{F}_{]a,\infty[}$ is a Dirichlet form   is the one  associated with the semigroup $(P^{]a,\infty[}_t)_{t\geq 0}$ of the linear continuous Markov process $X$ killed when it exits $]a,\infty[$. In the sequel we denote this restriction by $\mathcal{E}_{]a,\infty[}$. The killed process is symmetric with respect to the measure $\mathbb{I}_{]a,\infty[}\cdot m(dx)$.

\begin{prop}\label{halfpoincare} The generator of $\mathcal{E}_{]a,\infty[}$ has a spectral gap $\gamma_a^+>0$ if and only if the Hardy inequality~\eqref{eq:Hardy} holds with $A_a^+<\infty$. In this case, $\gamma_a^+={1}/{A_a^+}$.
\end{prop}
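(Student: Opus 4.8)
The plan is to reduce the statement to the equivalence between the Hardy inequality on $]a,\infty[$ and the spectral gap inequality~\eqref{eq:gap}, now applied to the generator of the killed form $\mathcal{E}_{]a,\infty[}$ on the Hilbert space $L^2(\mathbb{I}_{]a,\infty[}\cdot m)$. First I would observe that, by the discussion preceding the previous theorem, $-A_{]a,\infty[}$ (the generator of the killed semigroup) is a nonnegative self-adjoint operator on $L^2(\mathbb{I}_{]a,\infty[}\cdot m)$, so it admits a spectral family $(E_\xi)$, and it has a spectral gap of width at least $\gamma>0$ at the bottom of its spectrum if and only if
\[
\gamma\|f-E_0 f\|^2\leq \mathcal{E}_{]a,\infty[}(f,f),\quad \forall f\in\mathcal{F}_{]a,\infty[}.
\]
The crux is then to identify $E_0$, i.e. the subspace $H_0$ of $L^2(\mathbb{I}_{]a,\infty[}\cdot m)$ fixed by the killed semigroup. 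Here the key point is that the killed process is \emph{transient} — it is eventually absorbed at $a$ or escapes to $+\infty$, but $+\infty$ is a non-exit boundary by $S(+\infty)=+\infty$ — so $P^{]a,\infty[}_t u = u$ forces $u=0$ ($m$-a.e.\ on $]a,\infty[$). Hence $H_0=\{0\}$, $E_0=0$, and the spectral gap inequality collapses to $\gamma\|f\|^2\leq \mathcal{E}_{]a,\infty[}(f,f)$ for all $f\in\mathcal{F}_{]a,\infty[}$.

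Next I would translate both sides explicitly. By Theorem~\ref{D-form of X}, for $f\in\mathcal{F}_{]a,\infty[}$ one has $\mathcal{E}_{]a,\infty[}(f,f)=\int_{-\infty}^\infty (\frac{df}{dS})^2\,dS=\int_a^\infty(\frac{df}{dS})^2\,dS$, since $f$ vanishes on $]-\infty,a]$; and $\|f\|^2=\int_a^\infty f^2\,dm$. So the spectral gap inequality with constant $\gamma$ reads
\[
\gamma\int_a^\infty f^2\,dm\leq \int_a^\infty\left(\frac{df}{dS}\right)^2 dS,\quad \forall f\in\mathcal{F}_{]a,\infty[}.
\]
It remains to check that this inequality over $\mathcal{F}_{]a,\infty[}$ is equivalent to the Hardy inequality~\eqref{eq:Hardy} over $\mathcal{F}$, with reciprocal constants $\gamma=1/A$. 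The forward direction is immediate: given $F\in\mathcal{F}$, the function $f(x)=(F(x)-F(a))\mathbb{I}_{\{x>a\}}=F(x)-F(x\wedge a)$ lies in $\mathcal{F}_{]a,\infty[}$ (it is in $L^2(m)$ because $F$ is and $m$ is finite, and $\frac{df}{dS}=\frac{dF}{dS}\mathbb{I}_{]a,\infty[}\in L^2(dS)$), and plugging it in gives exactly~\eqref{eq:Hardy} with $A=1/\gamma$. Conversely, any $f\in\mathcal{F}_{]a,\infty[}$ already has the form $F(x)-F(a)$ for $x>a$ with $F:=f$ (note $f(a)=0$ by continuity from the vanishing on $]-\infty,a]$ and $dF\ll dS$), so the Hardy inequality applied to this $F$ yields the spectral-gap inequality; hence the optimal constants satisfy $A_a^+ = 1/\gamma_a^+$, and $A_a^+<\infty \iff \gamma_a^+>0$.

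The main obstacle I anticipate is the careful justification that $H_0=\{0\}$ — i.e.\ that no nonzero $u\in L^2(\mathbb{I}_{]a,\infty[}\cdot m)$ is invariant under the killed semigroup. One should argue from recurrence/escape: starting from $x>a$, the process hits $a$ in finite time a.s.\ (positive recurrence of $X$), so $P^{]a,\infty[}_t\mathbb{1}(x)=\P_x(T_a>t)\to 0$, and more generally $P^{]a,\infty[}_t u(x)\to 0$ for bounded $u$; an invariant $u$ would then have to vanish. A secondary technical point is confirming that $\mathcal{F}_{]a,\infty[}$ is exactly the domain of $\mathcal{E}_{]a,\infty[}$ and is dense enough that the spectral-gap characterization~\eqref{eq:gap} applies verbatim — but this is precisely the content of the part-domain construction of Fukushima et al.~\cite[p.~142]{FOT} cited just before the proposition, so it can be invoked. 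With $E_0=0$ established, the rest is the bookkeeping sketched above.
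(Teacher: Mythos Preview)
Your proposal is correct and follows essentially the same route as the paper: show $H_0=\{0\}$ for the killed semigroup using positive recurrence (so $\mathbb{P}_x[t<T_a]\to 0$ forces $E_0=0$), reduce the spectral-gap inequality to $\gamma\|f\|^2\leq \int_a^\infty(\tfrac{df}{dS})^2\,dS$ on $\mathcal{F}_{]a,\infty[}$, and identify this with Hardy via $F\mapsto F(x)-F(x\wedge a)$. The only refinement is that to conclude $u=0$ for a possibly unbounded invariant $u\in L^2$, the paper uses symmetry to pass to $(u,v)=(u,P^{]a,\infty[}_tv)\to 0$ for bounded $v$ --- exactly the duality step you would need to fill the gap you yourself flagged.
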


\begin{proof} Recall that if $u\in H_0$, $P^{]a,\infty[}_tu=u$. Take a bounded non negative fonction $v$. We have
$$
(u,v)=(P^{]a,\infty[}_tu,v)=(u,P^{]a,\infty[}_tv).
$$
But
$$
\lim_{t\to\infty}P^{]a,\infty[}_tv(x)\leq \|v\|_\infty \mathbb{P}_x[t<T_a]=0. 
$$
due to the positive recurrence property of $X$. We get $(u,v)=0$ for any such function $v$. This means that $u=0$ and therefore $E_0=0$.

Now, for all $F(x)\in\mathcal{F}_{]a,\infty[}$,
$$
\gamma\|F-E_0 F\|^2\leq\mathcal{E}_{]a,\infty[}(F,F)\quad\iff\quad \gamma\|F\|^2
\leq \int_{a}^\infty \left(\frac{dF}{dS}\right)^2 dS.
$$
Clearly, for any $F(x)\in\mathcal{F}$, $F(x)-F(x\wedge a)\in\mathcal{F}_{]a,\infty[}$, which finishes the proof.
\end{proof}

The same property evidently holds for $A_a^-$.
\section {Khasminskii identity}\label{sec:friedman}

The last section binds in a very direct way the exponential moments to the spectral gaps associated with $X$. Namely, we show that
\[\gamma_a^+=\lambda_a^+\quad\mbox{and}\quad \gamma_a^-=\lambda_a^-.\]

We begin with a general remark. Consider a Hunt process $X$ on a Polish space $\mathtt{E}$ in the sense of Fukushima et al~\cite{FOT}. Let $m$ be a Radon measure on $\mathtt{E}$. Suppose that $m$ is bounded and $X$ is a $m$-symmetric process. Denote by $(P_t)_{t\geq 0}$ the transition semigroup of $X$. Denote by $\mathbb{P}_x,x\in\mathtt{E}$, the law of the process $X$ issued from $x\in\mathtt{E}$. For an open set $\mathtt{G}$ in $\mathtt{E}$, set
$$
\tau_\mathtt{G}=\inf\{t>0: X_t\notin \mathtt{G}\}
$$
the exit time of $X$ from $\mathtt{G}$. Introduce 
$$
P^\mathtt{G}_t[A](x)=\mathbb{P}_x[X_t\in \mathtt{A};t<\tau_\mathtt{G}]
$$
for measurable subset $\mathtt{A}$ of $\mathtt{E}$, and set
$$
Y_t=\left\{
\dcb
X_t,&&0\leq t <\tau_\mathtt{G}\\
\Delta && t\geq \tau_\mathtt{G}
\dce
\right.
$$
Then, according to~\cite{FOT}, $Y$ is a Hunt process on the state space $\mathtt{G}$, symmetric with respect to the measure $\mathbb{I}_\mathtt{G}\cdot m(dx)$ with the transition semigroup $(P_t^\mathtt{G})$. If $A^\mathtt{G}$ denotes the infinitesimal generator of $(P_t^\mathtt{G})$ in $\mathbb{L}^2(\mathbb{I}_\mathtt{G}\cdot m(dx))$, $A^\mathtt{G}$ is a self-adjoint negative operator. Let us denote by $(\cdot,\cdot)$ the scalar product in $\mathbb{L}^2(\mathbb{I}_\mathtt{G}\cdot m(dx))$ and by $(E_\xi,\xi\geq 0)$ the spectral measure of $-A^\mathtt{G}$. 

For any bounded non negative function $f(x)\in\mathbb{L}^2(\mathbb{I}_\mathtt{G}\cdot m(dx))$, for all $\lambda> 0$, $0<N<\infty$, we have the formulas
\begin{eqnarray*}
\int_0^N e^{\lambda t} P_t^\mathtt{G}1(x) dt
&=&\frac{1}{\lambda}\mathbb{E}_x[e^{\lambda \tau_\mathtt{G}\wedge N}-1]\\
\int_0^N e^{\lambda t} P_t^\mathtt{G}f(x) dt&\leq& \|f\|_\infty \frac{1}{\lambda}\mathbb{E}_x[e^{\lambda \tau_\mathtt{G}\wedge N}-1]
\end{eqnarray*}
The spectral calculus gives
\begin{eqnarray*}
\frac{1}{\lambda}\mathbb{E}_x[e^{\lambda \tau_\mathtt{G}\wedge N}-1]=\int_{[0,\infty[} \frac{e^{(\lambda-\xi) N}-1}{\lambda-\xi} dE_\xi 1
\end{eqnarray*}
and
\begin{eqnarray*}
\left(\frac{1}{\lambda}\mathbb{E}_x[e^{\lambda \tau_\mathtt{G}\wedge N}-1],\frac{1}{\lambda}\mathbb{E}_x[e^{\lambda \tau_\mathtt{G}\wedge N}-1]\right)
=\int_{[0,\infty[} \left(\frac{e^{(\lambda-\xi) N}-1}{\lambda-\xi}\right)^2 d(E_\xi 1,1)
\end{eqnarray*}

\begin{hyp}
$\lambda_0>0$ and for any $\lambda<\lambda_0$, $\mathbb{E}_x[e^{\lambda \tau_\mathtt{G}}]$ is an element of $\mathbb{L}^1(\mathbb{I}_\mathtt{G}\cdot m(dx))$.
\end{hyp}

\begin{theo}\label{Khasminskii-Friedman identity} Hypothesis($\lambda_0$) is equivalent to $E_{(\lambda_0-)}=0$, i.e. $-A^\mathtt{G}$ has a spectral gap of width at least equal to $\lambda_0$.
\end{theo}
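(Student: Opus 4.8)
The plan is to reduce both sides of the equivalence to a single statement about the finite measure $\mu(d\xi):=d(E_\xi 1,1)$ on $[0,\infty[$, whose total mass is $\|1\|^2=m(\mathtt{G})<\infty$. Fix $\lambda<\lambda_0$ and set
\[
g_N(x)=\int_0^N e^{\lambda t}P^\mathtt{G}_t 1(x)\,dt=\tfrac1\lambda\mathbb{E}_x[e^{\lambda\tau_\mathtt{G}\wedge N}-1]\ge 0,
\]
which lies in $\mathbb{L}^2(\mathbb{I}_\mathtt{G}\cdot m)$ (it is bounded by $\tfrac1\lambda(e^{\lambda N}-1)$ and $m$ is finite), and let $g_\infty=\lim_N g_N=\tfrac1\lambda\mathbb{E}_\cdot[e^{\lambda\tau_\mathtt{G}}-1]$, a monotone pointwise limit. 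Since $g_\infty\ge 0$ and $m(\mathtt{G})<\infty$, we have $\mathbb{E}_\cdot[e^{\lambda\tau_\mathtt{G}}]\in\mathbb{L}^1(\mathbb{I}_\mathtt{G}\cdot m)$ iff $g_\infty\in\mathbb{L}^1$ iff $\langle g_\infty,1\rangle<\infty$. Pairing the spectral representation of $g_N$ recalled just above the Hypothesis with the vector $1$ gives $\langle g_N,1\rangle=\int_{[0,\infty[}\frac{e^{(\lambda-\xi)N}-1}{\lambda-\xi}\,\mu(d\xi)$, and letting $N\to\infty$ by monotone convergence on each side — the integrand is nonnegative and nondecreasing in $N$ for every $\xi$, and $g_N\uparrow g_\infty$ — yields
\[
\langle g_\infty,1\rangle=\int_{[0,\infty[}\ell_\lambda\,d\mu,\qquad
\ell_\lambda(\xi)=\begin{cases}(\xi-\lambda)^{-1}, & \xi>\lambda,\\ +\infty, & \xi\le\lambda.\end{cases}
\]
Hence $\langle g_\infty,1\rangle<\infty$ forces $\mu([0,\lambda])=0$; conversely, if $\mu$ is carried by $[\lambda_0,\infty[$ then $\langle g_\infty,1\rangle\le m(\mathtt{G})/(\lambda_0-\lambda)<\infty$. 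As $\mu([0,\lambda])=\|E_\lambda 1\|^2$ and Hypothesis($\lambda_0$) requires $\mathbb{E}_\cdot[e^{\lambda\tau_\mathtt{G}}]\in\mathbb{L}^1$ for every $\lambda<\lambda_0$, this chain gives the intermediate equivalence: Hypothesis($\lambda_0$) $\iff$ $E_{(\lambda_0-)}1=0$.

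What remains — and what I expect to be the \emph{main obstacle} — is to pass from ``$E_{(\lambda_0-)}1=0$'' to the genuine spectral gap ``$E_{(\lambda_0-)}=0$'', the converse being trivial; a priori $\tau_\mathtt{G}$ controls only the part of the spectrum seen from the constant $1$. The remedy is positivity of the killed semigroup. If $0\le f\le\|f\|_\infty$, then $P^\mathtt{G}_t f\le\|f\|_\infty P^\mathtt{G}_t 1$, so for every $t>0$
\begin{multline*}
\int_{[0,\infty[}e^{-\xi t}\,d(E_\xi f,f)=\langle P^\mathtt{G}_t f,f\rangle\le\|f\|_\infty^2\langle P^\mathtt{G}_t 1,1\rangle\\
=\|f\|_\infty^2\int_{[0,\infty[}e^{-\xi t}\mu(d\xi)\le\|f\|_\infty^2\,m(\mathtt{G})\,e^{-\lambda_0 t},
\end{multline*}
the last step using $\mu([0,\lambda_0[)=\|E_{(\lambda_0-)}1\|^2=0$. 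If $d(E_\xi f,f)$ charged some $[0,\lambda_1]$ with $\lambda_1<\lambda_0$ by an amount $\varepsilon>0$, the left side would be at least $\varepsilon e^{-\lambda_1 t}$, contradicting the bound as $t\to\infty$; hence $E_{(\lambda_0-)}f=0$. Writing a bounded $f$ as $f^+-f^-$ and using density of bounded functions in $\mathbb{L}^2(\mathbb{I}_\mathtt{G}\cdot m)$ gives $E_{(\lambda_0-)}=0$.

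Finally I would assemble the two implications: Hypothesis($\lambda_0$) $\Rightarrow E_{(\lambda_0-)}1=0\Rightarrow E_{(\lambda_0-)}=0$, and $E_{(\lambda_0-)}=0\Rightarrow E_{(\lambda_0-)}1=0\Rightarrow$ Hypothesis($\lambda_0$), noting that $\lambda_0>0$ is built into both statements. The only nonroutine points are the two applications of monotone convergence flagged above (for which the elementary monotonicity of $N\mapsto\frac{e^{aN}-1}{a}$ and $g_N\uparrow g_\infty$ suffice) and the spectral-calculus identities $P^\mathtt{G}_t=\int_{[0,\infty[}e^{-\xi t}\,dE_\xi$ and $g_N=\int_{[0,\infty[}\frac{e^{(\lambda-\xi)N}-1}{\lambda-\xi}\,dE_\xi 1$, the latter already supplied in the text; everything else is bookkeeping.
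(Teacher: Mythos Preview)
Your proof is correct, but it is organized differently from the paper's. The paper proves the forward direction (Hypothesis $\Rightarrow$ gap) in one shot for a general bounded nonnegative $f$: it uses the pointwise domination $\int_0^N e^{\lambda t}P^{\mathtt{G}}_t f\,dt\le \|f\|_\infty\,\tfrac{1}{\lambda}\mathbb{E}_\bullet[e^{\lambda\tau_{\mathtt{G}}\wedge N}-1]$, pairs both sides with $f$, and lets $N\uparrow\infty$; Hypothesis($\lambda_0$) bounds the right-hand side, which forces $(E_{(\lambda-)}f,f)=0$ for all $\lambda<\lambda_0$ and all such $f$, hence $E_{(\lambda_0-)}=0$ by density. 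For the converse, the paper computes $\|g_N\|^2=\int_{[\lambda_0,\infty[}\bigl(\tfrac{1-e^{(\lambda-\xi)N}}{\xi-\lambda}\bigr)^2 d(E_\xi 1,1)$ and applies dominated convergence, obtaining the stronger conclusion $\mathbb{E}_\bullet[e^{\lambda\tau_{\mathtt{G}}}]\in\mathbb{L}^2$ (and then $\mathbb{L}^1$).

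You instead factor through the intermediate statement $E_{(\lambda_0-)}1=0$: first you show this is equivalent to Hypothesis($\lambda_0$) by working with the $\mathbb{L}^1$ pairing $\langle g_N,1\rangle$, and only then do you upgrade to the full spectral gap via the large-$t$ decay of $\langle P^{\mathtt{G}}_t f,f\rangle$. Both arguments rely on the same positivity $P^{\mathtt{G}}_t f\le\|f\|_\infty P^{\mathtt{G}}_t 1$; the paper integrates this inequality in $t$ against $e^{\lambda t}$, whereas you exploit it asymptotically as $t\to\infty$. Your route is a bit more modular and stays at the $\mathbb{L}^1$ level throughout; the paper's is more direct and delivers an $\mathbb{L}^2$ bound on $\mathbb{E}_\bullet[e^{\lambda\tau_{\mathtt{G}}}]$ as a byproduct.
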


\begin{rem}
This equality for bounded domains $\G$ is well-known since the works of Khasminskii~\cite{Khas} and Friedman~\cite{Fri}. However, the proof of~\cite[Theorem 2]{Khas} makes use of the boundedness of $\E_x\tau_\G$ in $\G$, which may not be the case in our general setting.
\end{rem}

The proof is divided in two parts.

\begin{lem} Hypothesis($\lambda_0$) implies $\int_{[0,\lambda_0[}dE_\xi=0$, i.e. $E_{(\lambda_0-)}=0$
\end{lem}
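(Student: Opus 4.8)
The goal is to show that if Hypothesis($\lambda_0$) holds, then the spectral measure $(E_\xi)$ of $-A^\mathtt{G}$ puts no mass on $[0,\lambda_0[$; equivalently, the monotone measure $\mu(d\xi)=d(E_\xi 1,1)$ on $[0,\infty[$ satisfies $\mu([0,\lambda_0[)=0$. The strategy is to exploit the two spectral-calculus identities displayed just before the theorem and to let $N\to\infty$, watching what the integrability assumption forces on $\mu$ near the ``resonant'' values $\xi<\lambda$.

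\medskip
\textbf{Step 1: reduce to an integrability statement for a fixed $\lambda$.} Fix $\lambda$ with $0<\lambda<\lambda_0$. By Hypothesis($\lambda_0$), $\mathbb{E}_\bullet[e^{\lambda\tau_\mathtt{G}}]\in\mathbb{L}^1(\mathbb{I}_\mathtt{G}\cdot m)$, and since $m$ is bounded it is also in $\mathbb{L}^2$ after a truncation argument; more directly, $\frac1\lambda(\mathbb{E}_x[e^{\lambda\tau_\mathtt{G}\wedge N}-1])$ increases with $N$ to the finite limit $\frac1\lambda(\mathbb{E}_x[e^{\lambda\tau_\mathtt{G}}]-1)$, which is in $\mathbb{L}^1(\mathbb{I}_\mathtt{G}\cdot m)$, hence (being bounded by an $\mathbb{L}^1$ function and, after a further truncation, controllable in $\mathbb{L}^2$) the functions $g_N(x):=\frac1\lambda(\mathbb{E}_x[e^{\lambda\tau_\mathtt{G}\wedge N}-1])$ form a bounded family. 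The plan is to test against the constant function $\1$: pairing the first displayed formula with $\1$ in $\mathbb{L}^2(\mathbb{I}_\mathtt{G}\cdot m)$ gives $(g_N,\1)=\int_{[0,\infty[}\frac{e^{(\lambda-\xi)N}-1}{\lambda-\xi}\,\mu(d\xi)$, and this quantity stays bounded as $N\to\infty$ because $g_N\uparrow g_\infty\in\mathbb{L}^1$ and $m(\mathtt{G})<\infty$.

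\medskip
\textbf{Step 2: force $\mu([0,\lambda[)=0$ via the blow-up of the kernel.} On the set $\xi<\lambda$ the integrand $\frac{e^{(\lambda-\xi)N}-1}{\lambda-\xi}$ is positive and increases to $+\infty$ as $N\to\infty$ for every such $\xi$. By monotone convergence, $\int_{[0,\lambda[}\frac{e^{(\lambda-\xi)N}-1}{\lambda-\xi}\,\mu(d\xi)\to+\infty$ unless $\mu([0,\lambda[)=0$. On the complementary set $\{\xi\geq\lambda\}$ the integrand is bounded (by $N$ near $\xi=\lambda$, but one can instead bound the whole integral over $\{\xi\ge\lambda+\delta\}$ uniformly, and handle the strip $[\lambda,\lambda+\delta)$ separately — or, cleaner, first establish $\mu$ has no atom at $\lambda$ and split at $\lambda+\delta$), so that part does not help the left side blow up. Hence the boundedness of $(g_N,\1)$ forces $\mu([0,\lambda[)=0$. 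Letting $\lambda\uparrow\lambda_0$ gives $\mu([0,\lambda_0[)=0$, i.e. $E_{(\lambda_0-)}1=0$. Finally, because $\mathtt{G}$ is connected and $X$ is irreducible on $\mathtt{G}$ (the one-dimensional diffusion killed on leaving a half-line), $\1$ is a cyclic vector for the spectral family, so $E_{(\lambda_0-)}1=0$ upgrades to $E_{(\lambda_0-)}=0$; alternatively one repeats the computation with a general bounded nonnegative $f$ in place of $\1$, using the second inequality $(g_N,f)\le\|f\|_\infty(g_N,\1)$, which already shows $d(E_\xi f,f)$ vanishes on $[0,\lambda_0[$ for all such $f$.

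\medskip
\textbf{Main obstacle.} The delicate point is the interchange of limits and the precise handling of the spectrum near $\xi=\lambda$: one must be sure that the potentially large contribution of $\mu$ near (but above) $\lambda$ — where $\frac{e^{(\lambda-\xi)N}-1}{\lambda-\xi}\approx N$ — cannot cancel or mask the blow-up coming from $\mu([0,\lambda[)$. This is resolved by first noting $\mu(\{\lambda\})=0$ (otherwise even the $\mathbb{L}^2$ identity with the squared kernel $\big(\frac{e^{(\lambda-\xi)N}-1}{\lambda-\xi}\big)^2\sim N^2$ would diverge, contradicting $g_N$ being $\mathbb{L}^2$-bounded), then choosing a small $\delta>0$ with $\lambda+\delta<\lambda_0$ and splitting $[0,\infty[$ into $[0,\lambda[$, $[\lambda,\lambda+\delta[$ and $[\lambda+\delta,\infty[$; on the middle piece the kernel is $O(N)$ but, applying the argument with $\lambda$ replaced by $\lambda+\delta$, one already knows $\mu$ is supported in $[\lambda+\delta,\infty[$ up to the induction, so the middle piece is in fact empty. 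Running this as an increasing sweep of $\lambda$ up to $\lambda_0$ closes the argument cleanly.
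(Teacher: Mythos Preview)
Your core idea is correct and coincides with the paper's: fix $0<\lambda<\lambda_0$, use that $(g_N,\1)\uparrow\frac{1}{\lambda}\int_\mathtt{G}(\mathbb{E}_x[e^{\lambda\tau_\mathtt{G}}]-1)\,m(dx)<\infty$ by Hypothesis($\lambda_0$), and compare with the spectral expression to force the measure on $[0,\lambda[$ to vanish.

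However, your ``Main obstacle'' is a phantom. The kernel
\[
\xi\longmapsto \frac{e^{(\lambda-\xi)N}-1}{\lambda-\xi}
\]
is \emph{nonnegative} for every $\xi\geq 0$ and every $N>0$ (for $\xi>\lambda$ it equals $\frac{1-e^{-(\xi-\lambda)N}}{\xi-\lambda}\geq 0$, and at $\xi=\lambda$ it equals $N$). Hence
\[
\int_{[0,\lambda[}\frac{e^{(\lambda-\xi)N}-1}{\lambda-\xi}\,d(E_\xi\1,\1)
\;\leq\;
\int_{[0,\infty[}\frac{e^{(\lambda-\xi)N}-1}{\lambda-\xi}\,d(E_\xi\1,\1)=(g_N,\1)\leq C<\infty,
\]
and monotone convergence on the left immediately gives $(E_{(\lambda-)}\1,\1)=0$. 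There is nothing to ``cancel or mask'', no need to isolate an atom at $\lambda$, and no inductive sweep is required; that whole paragraph can be deleted. (Incidentally, your atom argument appeals to $g_N$ being $\mathbb{L}^2$-bounded, which you never established --- Step~1 only gives $\mathbb{L}^1$.)

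The paper differs from you only in that it applies the argument to an arbitrary bounded nonnegative $f$ from the outset, using the displayed inequality $\int_0^N e^{\lambda t}P_t^\mathtt{G}f\,dt\leq \|f\|_\infty\, g_N$ and pairing with $f$: this yields $(E_{(\lambda-)}f,f)=0$ for all such $f$, hence $E_{(\lambda-)}=0$ by density. This is preferable to your cyclic-vector route, since the lemma is stated for a general $m$-symmetric Hunt process on a Polish space, where the claim that $\1$ is cyclic for $(E_\xi)$ is not available.
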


\begin{proof} Let $0<\lambda<\lambda_0$. For any bounded non negative function $f(x)\in\mathbb{L}^2(\mathbb{I}_\mathtt{G}\cdot m(dx))$, for all $\lambda> 0$, $0<N<\infty$, we can write
\begin{eqnarray*}
&&\frac{\|f\|_\infty^2}{\lambda}(\mathbb{E}_\bullet[e^{\lambda \tau_\mathtt{G}\wedge N}-1], 1)\\
&\geq &\left(\int_0^N e^{\lambda t} P_t^\mathtt{G}f dt, f\right)\\
&=&\int_{[0,\infty[}d(E_\xi f, f)\int_0^N e^{(\lambda-\xi) t} dt\\
&\geq&\int_{[0,\lambda[}d(E_\xi f, f)\int_0^N e^{(\lambda-\xi) t} dt\\
&=&\int_{[0,\lambda[}\frac{e^{(\lambda-\xi) N}-1}{\lambda-\xi} d(E_\xi f, f)\\
\end{eqnarray*}
Taking the limit when $N\uparrow\infty$, the preceding computation gives
$$
(E_{(\lambda-)}f,f)=\int_{[0,\lambda[}d(E_\xi f, f)=0
$$
The bounded non negative functions being dense in $\mathbb{L}^2(\mathbb{I}_\mathtt{G}\cdot m(dx))$, we conclude that $E_{(\lambda-)}=0$. Since this holds for any $0<\lambda<\lambda_0$, the lemma is proved.
\end{proof}

\begin{lem} Let $0<\lambda<\lambda_0$. Suppose that $E_{(\lambda_0-)}=0$, i.e. $\int_{[0,\lambda_0[}dE_\xi=0$. Then, $\mathbb{E}_\cdot[e^{\lambda}]$ is an element of $\mathbb{L}^2(\mathbb{I}_\mathtt{G}\cdot m(dx))$ and therefore Hypothesis($\lambda_0$) is true.
\end{lem}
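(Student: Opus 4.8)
The plan is to reverse the computation used in the first lemma, replacing the lower bound "$\geq$" chain that discarded the contribution of the spectrum in $[\lambda,\infty[$ by a genuine two-sided control, now legitimate because by hypothesis the spectrum is entirely contained in $[\lambda_0,\infty[$. Concretely, I would start from the spectral representation
\[
\frac{1}{\lambda}\mathbb{E}_x[e^{\lambda\tau_\mathtt{G}\wedge N}-1]=\int_{[\lambda_0,\infty[}\frac{e^{(\lambda-\xi)N}-1}{\lambda-\xi}\,dE_\xi 1(x),
\]
where the integral is now over $[\lambda_0,\infty[$ since $E_{(\lambda_0-)}=0$. On this range $\lambda-\xi\leq\lambda-\lambda_0<0$, so the integrand $\frac{e^{(\lambda-\xi)N}-1}{\lambda-\xi}$ is bounded uniformly in $N$ by $\frac{1}{\xi-\lambda}\leq\frac{1}{\lambda_0-\lambda}$ and is nonnegative; moreover it increases to $\frac{1}{\xi-\lambda}$ as $N\uparrow\infty$. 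Hence, by monotone convergence applied to the $L^2$-valued (or rather, to the scalar, after pairing) spectral integral,
\[
\frac{1}{\lambda}\mathbb{E}_x[e^{\lambda\tau_\mathtt{G}}-1]=\int_{[\lambda_0,\infty[}\frac{1}{\xi-\lambda}\,dE_\xi 1(x)
\]
in $\mathbb{L}^2(\mathbb{I}_\mathtt{G}\cdot m(dx))$, and taking the squared norm,
\[
\left\|\frac{1}{\lambda}\mathbb{E}_\cdot[e^{\lambda\tau_\mathtt{G}}-1]\right\|^2=\int_{[\lambda_0,\infty[}\frac{1}{(\xi-\lambda)^2}\,d(E_\xi 1,1)\leq\frac{1}{(\lambda_0-\lambda)^2}\,\|1\|^2=\frac{m(\mathtt{G})}{(\lambda_0-\lambda)^2}<\infty.
\]
This shows $\mathbb{E}_\cdot[e^{\lambda\tau_\mathtt{G}}-1]\in\mathbb{L}^2(\mathbb{I}_\mathtt{G}\cdot m(dx))$, hence so is $\mathbb{E}_\cdot[e^{\lambda\tau_\mathtt{G}}]$ (adding the constant $1\in\mathbb{L}^2$ since $m$ is bounded); since $m$ is bounded, $\mathbb{L}^2\subset\mathbb{L}^1$ and therefore $\mathbb{E}_\cdot[e^{\lambda\tau_\mathtt{G}}]\in\mathbb{L}^1(\mathbb{I}_\mathtt{G}\cdot m(dx))$ for every $\lambda<\lambda_0$, which is exactly Hypothesis($\lambda_0$).

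The one technical point that needs care — and the place I would expect the main obstacle — is the passage to the limit $N\uparrow\infty$ inside the spectral integral to obtain $\mathbb{E}_\cdot[e^{\lambda\tau_\mathtt{G}}]$ as a genuine element of $\mathbb{L}^2$ rather than merely as a monotone limit of truncations. One must first observe that, pointwise in $x$, $\mathbb{E}_x[e^{\lambda\tau_\mathtt{G}\wedge N}-1]\uparrow\mathbb{E}_x[e^{\lambda\tau_\mathtt{G}}-1]$ by the monotone convergence theorem for the underlying expectation; then that the $\mathbb{L}^2$-norms of the truncations are uniformly bounded by $\sqrt{m(\mathtt{G})}/(\lambda_0-\lambda)$ via the spectral identity above. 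A bounded, monotone (hence Cauchy) sequence in $\mathbb{L}^2$ converges in $\mathbb{L}^2$, and its $\mathbb{L}^2$-limit must agree $m$-a.e. with its pointwise limit; this identifies the limit as $\mathbb{E}_\cdot[e^{\lambda\tau_\mathtt{G}}-1]$. Combined with the first lemma, which gives the forward implication Hypothesis($\lambda_0$)$\Rightarrow E_{(\lambda_0-)}=0$, this establishes the claimed equivalence and completes the proof of Theorem~\ref{Khasminskii-Friedman identity}.
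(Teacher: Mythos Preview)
Your proposal is correct and follows essentially the same line as the paper's proof: the paper also writes the squared $L^2$-norm of $\frac{1}{\lambda}\mathbb{E}_\bullet[e^{\lambda\tau_\mathtt{G}\wedge N}-1]$ as the spectral integral over $[\lambda_0,\infty[$, bounds the integrand by $(\xi-\lambda)^{-2}\leq(\lambda_0-\lambda)^{-2}$, and lets $N\uparrow\infty$ (invoking dominated rather than monotone convergence) to reach the same estimate $m(\mathtt{G})/(\lambda_0-\lambda)^2$. If anything, your explicit discussion of identifying the pointwise monotone limit with the $L^2$ limit is more careful than the paper's own treatment of that step.
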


\begin{proof} For $0<\lambda<\lambda_0$ we look at the formula
$$
\dcb
(\frac{1}{\lambda}\mathbb{E}_\bullet[e^{\lambda \tau_\mathtt{G}\wedge N}-1],\frac{1}{\lambda}\mathbb{E}_\bullet[e^{\lambda \tau_\mathtt{G}\wedge N}-1])
=\int_{[\lambda_0,\infty[} \left(\frac{1-e^{(\lambda-\xi) N}}{\xi-\lambda}\right)^2 d(E_\xi 1,1)
\dce
$$
Let $N\uparrow\infty$. The dominated convergence theorem yields
\begin{eqnarray*}
&&\frac{1}{\lambda^2}(\mathbb{E}_\bullet[e^{\lambda \tau_\mathtt{G}}-1], \mathbb{E}_\bullet[e^{\lambda \tau_\mathtt{G}}-1])\\
&=&\int_{[\lambda_0,\infty[}\frac{1}{(\xi-\lambda)^2}d(E_\xi 1, 1)\\
&\leq &\frac{1}{(\lambda_0-\lambda)^2}(1,1)\\
&=&\frac{1}{(\lambda_0-\lambda)^2}m(\mathtt{G})<\infty
\end{eqnarray*}
i.e. $\mathbb{E}_\cdot[e^{\lambda}]$ is effectively in $\mathbb{L}^2(\mathbb{I}_\mathtt{G}\cdot m(dx))$, and \`a fortiori in $\mathbb{L}^1(\mathbb{I}_\mathtt{G}\cdot m(dx))$ because $m$ is a bounded measure. The lemma is proved.
\end{proof}

Now, for $\G=]a,\infty[$ and $\G=]-\infty,a[$, in virtue of the ``all-or-none'' proposition~\ref{allornothing} the Hypotheses($\lambda_a^\pm$) are verified, and we obtain the equalities
\begin{equation*}
\gamma_a^+=\lambda_a^+\quad\mbox{and}\quad \gamma_a^-=\lambda_a^-.
\end{equation*}

To resume our main results, let us state a concluding theorem.
\begin{theo}\label{gapandmoments} For any $a\in\R$, 
$$
\frac{1}{A^+_a} = \gamma^+_a = \lambda^+_a\quad\mbox{and}\quad
\frac{1}{A^-_a} = \gamma^-_a = \lambda^-_a
$$
with
$$
B^+_a\leq A^+_a\leq 4B^+_a \quad\mbox{and}\quad
B^-_a\leq A^-_a\leq 4B^-_a.
$$
Furthermore,
$$
\sup_a\left(A^+_a\wedge A^-_a\right) \leq c_P\leq \inf_a\left(A^+_a\vee A^-_a\right)
$$
or, equivalently,
$$
\sup_a\left({\lambda^+_a}\wedge{\lambda^-_a}\right) \leq \gamma\leq \inf_a\left({\lambda^+_a}\vee{\lambda^-_a}\right),
$$
where $\gamma=1/c_P$ is the spectral gap of $X$ on $\R$.
\end{theo}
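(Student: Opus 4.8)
The statement is a synthesis of the results established in Sections~\ref{sec:expmom}--\ref{sec:friedman}, so the plan is to assemble them and then carry out one elementary reformulation. First I would record that the two-sided bound $B^\pm_a\leq A^\pm_a\leq 4B^\pm_a$ is precisely Theorem~\ref{Hardy}, leaving nothing to prove there. Next, Proposition~\ref{halfpoincare} and its analogue for the lower half-line identify the reciprocal of the optimal Hardy constant with the spectral gap of the killed generator, $1/A^\pm_a=\gamma^\pm_a$, under the convention that $A^\pm_a=+\infty$ corresponds to the absence of a spectral gap, i.e. $\gamma^\pm_a=0$ (this degenerate case occurring exactly when $B^\pm_a=+\infty$).

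The substantive point is the equality $\gamma^\pm_a=\lambda^\pm_a$, which I would draw from the Khasminskii identity. Apply Theorem~\ref{Khasminskii-Friedman identity} with $\G=]a,\infty[$, so that $\tau_\G=T_a$ and the killed generator $-A^{]a,\infty[}$ is the one of Proposition~\ref{halfpoincare}. For $\lambda_0>0$, Hypothesis($\lambda_0$) requires $\E_\xi[e^{\lambda T_a}]\in\mathbb{L}^1(\I_{]a,\infty[}\cdot m)$ for every $\lambda<\lambda_0$; by the all-or-none Proposition~\ref{allornothing} this is equivalent to $\E_x[e^{\lambda T_a}]<\infty$ for every $x>a$ and every $\lambda<\lambda_0$, hence to $\lambda_0\leq\lambda^+_a$. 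On the other hand Theorem~\ref{Khasminskii-Friedman identity} says Hypothesis($\lambda_0$) is equivalent to $-A^{]a,\infty[}$ having a spectral gap of width at least $\lambda_0$, i.e. to $\lambda_0\leq\gamma^+_a$. Taking the supremum over admissible $\lambda_0$ gives $\gamma^+_a=\lambda^+_a$; the choice $\G=]-\infty,a[$ is handled identically, and when $\lambda^+_a=0$ the three quantities $1/A^+_a$, $\gamma^+_a$, $\lambda^+_a$ all vanish, so the chain of equalities is trivially true. The one step deserving care here is the passage between the $\mathbb{L}^1(m)$-form of Hypothesis($\lambda_0$) and the pointwise definition of $\lambda^\pm_a$, which is exactly what the all-or-none property supplies.

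Finally I would transfer the Poincar\'e sandwich. Theorem~\ref{Poincare} gives literally $\sup_a(A^+_a\wedge A^-_a)\leq c_P\leq\inf_a(A^+_a\vee A^-_a)$, which is the first displayed double inequality. For the second, put $\gamma=1/c_P$ and use $\lambda^\pm_a=1/A^\pm_a$ together with the elementary identities $(1/u)\wedge(1/v)=1/(u\vee v)$ and $(1/u)\vee(1/v)=1/(u\wedge v)$ valid on $[0,+\infty]$ with $1/0=+\infty$, $1/(+\infty)=0$: this rewrites $\inf_a(A^+_a\vee A^-_a)$ as $1/\sup_a(\lambda^+_a\wedge\lambda^-_a)$ and $\sup_a(A^+_a\wedge A^-_a)$ as $1/\inf_a(\lambda^+_a\vee\lambda^-_a)$, so inverting the first sandwich (which reverses both inequalities) yields $\sup_a(\lambda^+_a\wedge\lambda^-_a)\leq\gamma\leq\inf_a(\lambda^+_a\vee\lambda^-_a)$. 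I expect no genuine obstacle in this theorem: all the real content sits in the earlier results, and the only thing to watch is the consistent bookkeeping of the infinite values, dealt with once and for all by the conventions above.
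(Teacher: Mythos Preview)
Your proposal is correct and matches the paper's own treatment: the theorem is stated explicitly as a r\'esum\'e of the preceding results, with the equalities $\gamma^\pm_a=\lambda^\pm_a$ obtained just before by applying Theorem~\ref{Khasminskii-Friedman identity} to $\G=]a,\infty[$ and $\G=]-\infty,a[$ via the all-or-none Proposition~\ref{allornothing}, and the remaining pieces quoted from Theorems~\ref{Hardy} and~\ref{Poincare} and Proposition~\ref{halfpoincare}. Your inversion of the Poincar\'e sandwich to pass from the $A^\pm_a$ formulation to the $\lambda^\pm_a$ formulation is the only step the paper leaves implicit, and you have carried it out correctly.
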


\section*{Acknowledgements}
The authors wish to thank Gilles Harg\'e, Francis Hirsch and Florent Malrieu for providing us useful information and bibliographic references.

\def\refname{References}

\end{document}